\numberwithin{equation}{section}
\newtheorem{theorem}{Theorem}[section]
\newtheorem{lem}[theorem]{Lemma}
\newtheorem{thm}[theorem]{Theorem}
\newtheorem{cor}[theorem]{Corollary}
\newtheorem{rem}[theorem]{Remark}
\title{Inequalities for eigenvalues of Laplacian and biharmonic operators on submanifolds}
\author{Yong Luo${}$\footnote{Corresponding author. Email address: yongluo-math@cqut.edu.cn},  Xianjing Zheng}
\date{}
\begin{document}
	\maketitle
	\begin{abstract}In this paper we study eigenvalues of Laplacian and biharmonic operators on compact domains in complete manifolds. We establish several new inequalities for eigenvalues of Laplacian and
biharmonic operators respectively  by using Sobolev type inequalities.

	\end{abstract}
\textbf{AMS subject classifications:} 58J50, 58E11, 35P15.
\\\textbf{Key words:} Eigenvalue, universal inequality, Laplacian, biharmonic operator, submanifold.
	\section{Introduction}
     Let $(M,g)$  be an $n$-dimensional complete Riemannian manifold and $\Omega$  a bounded connected domain in $M$. Let  $\Delta$  and $\Delta^{2}$ denote the Laplacian and biharmonic operators acting on functions on $ M $ respectively. The Dirichlet eigenvalue problem is stated as:
     	\begin{eqnarray}\label{Dirichlet problem}
     	\left\{\begin{array}{c}
     		\Delta u=-\lambda u \text { in } \Omega, \\
     		\left.u\right|_{\partial \Omega}=0 .
     	\end{array}\right.
     \end{eqnarray}
     \indent
     Let
     \begin{eqnarray*}
     	0<\lambda_{1}<\lambda_{2} \leq \lambda_{3} \leq \cdots,
     \end{eqnarray*}
     denote the successive eigenvalues of (\ref{Dirichlet problem}). Here each eigenvalue is repeated according to its multiplicity.\\
     \indent
     Generally, if an equality of eigenvalus requires no hypotheses on the geometric quantities of the domain $\Omega$ (other than its dimension), it is called a universal inequality. Universal inequality is an important area in the study of the spectrum of $\Delta$ and many works have been done during the past decades. Now we give a brief introduction of the main results.\\
     \indent
     The study of universal inequalities for (\ref{Dirichlet problem}) was initiated by Payne, P\'olya, and Weinberger \cite{Payne} in 1956, they proved
     \begin{eqnarray*}
     	\lambda_{k+1}-\lambda_{k} \leq \frac{2}{k} \sum_{i=1}^{k} \lambda_{i}
     \end{eqnarray*}
     for $\Omega \subset \mathbb{R}^{2} $. Exactly by the same proof, their result can promoted to dimension $n$ as
     \begin{eqnarray*}
     	\lambda_{k+1}-\lambda_{k} \leq \frac{4}{k n} \sum_{i=1}^{k} \lambda_{i}
     \end{eqnarray*}
     for $ \Omega \subset \mathbb{R}^{n}$.\\
     \indent
     In 1980, Hile and Protter \cite{Hile} proved
     \begin{eqnarray*}
      \sum_{i=1}^{k} \frac{\lambda_{i}}{\lambda_{k+1}-\lambda_{i}} \geq \frac{k n}{4}.
     \end{eqnarray*}\\
     \indent
      In 1991, Yang \cite{Yang} proved
     \begin{eqnarray*}
     	\sum_{i=1}^{k}\left(\lambda_{k+1}-\lambda_{i}\right)\left(\lambda_{k+1}-\left(1+\frac{4}{n}\right) \lambda_{i}\right) \leq 0,
     \end{eqnarray*}
     which is called Yang's first inequality. It can be derived from this inequality that
     \begin{eqnarray*}
     \lambda_{k+1} \leq \frac{1}{k}\left(1+\frac{4}{n}\right) \sum_{i=1}^{k} \lambda_{i},
     \end{eqnarray*}
     which is called Yang's second inequality.\\
     \indent
   When  $M$ is isometrically immersed in $R^m$ , Chen and Cheng \cite{Chen} and Soufi, Harrel and Ilias \cite{Sou}  have proved independently that
   \begin{eqnarray}
   	\sum_{i=1}^{k}\left(\lambda_{k+1}-\lambda_{i}\right)^{2} \leq \frac{4}{n} \sum_{i=1}^{k}\left(\lambda_{k+1}-\lambda_{i}\right)\left(\lambda_{i}+\frac{n^{2}}{4} H_{0}^{2}\right),
   \end{eqnarray}
       where $ H_{0}=\max _{x \in \Omega}|\mathbf{H}|$ and $ \mathbf{H} $ is the mean curvature vector of $M$ .\\
       \indent
       Next we consider the eigenvalue problem for the biharmonic operator or the so-called clamped plate problem, which is given by
       \begin{eqnarray}	\label{clamped plate problem}
       	\left\{\begin{array}{l}
       		\Delta^{2} u=\Gamma u \text { in } \Omega, \\
       		\left.u\right|_{\partial \Omega}=\left.\frac{\partial u}{\partial \nu}\right|_{\partial \Omega}=0 .
       	\end{array}\right.
       \end{eqnarray}
       where $\nu$ denotes the outward unit normal to the boundary $\partial \Omega$.\\
       \indent
       The study on universal inequalities for eigenvalues of the clamped plate problem can be traced  back to 1950s. When  M  is a Euclidean space $ \mathbf{R}^{n}$ , Payne, P\'olya and Weinberger \cite{Payne} in 1956 proved that the eigenvalues $\left\{\Gamma_{i}\right\}_{i=1}^{\infty}$ of the problem (\ref{clamped plate problem}) satisfy
       \begin{eqnarray}\label{Payne's function}
       \Gamma_{k+1}-\Gamma_{k} \leq \frac{8(n+2)}{n^{2}} \frac{1}{k} \sum_{i=1}^{k} \Gamma_{i} .
       \end{eqnarray}\\
       \indent
       In 1984, Hile and Yeh \cite{Yeh} strengthened (\ref{Payne's function}) and obtained
       \begin{eqnarray*}
       	 \frac{n^{2} k^{3 / 2}}{8 n+2}\left(\sum_{i=1}^{k} \Gamma_{i}\right)^{-\frac{1}{2}} \leq \sum_{i=1}^{k} \frac{\Gamma_{i}^{\frac{1}{2}}}{\Gamma_{k+1}-\Gamma_{i}}.
       \end{eqnarray*}\\
       \indent
       In 1990, Chen and Qian \cite{Qian},  and Hook \cite{Hook} independently proved the following inequality
      \begin{eqnarray*}
      \frac{n^{2} k^{2}}{8(n+2)} \leq\left(\sum_{i=1}^{k} \Gamma_{i}^{1 / 2}\right)\left(\sum_{i=1}^{k} \frac{\Gamma_{i}^{1 / 2}}{\Gamma_{k+1}-\Gamma_{i}}\right) .
      \end{eqnarray*} \\
      \indent
      In  Ashbaugh's survey paper \cite{Ashbaugh} on recent developments on eigenvalue problems, he asked if we can establish an analog of Yang's first inequality for the eigenvalues of the clamped plate problem. In 2006, Cheng and Yang solved this problem in \cite{Cheng2} by proving that
      \begin{eqnarray*}
      	\Gamma_{k+1}-\frac{1}{k} \sum_{i=1}^{k} \Gamma_{i} \leq\left[\frac{8(n+2)}{n^{2}}\right]^{\frac{1}{2}} \frac{1}{k} \sum_{i=1}^{k}\left[\Gamma_{i}\left(\Gamma_{k+1}-\Gamma_{i}\right)\right]^{\frac{1}{2}} .
      \end{eqnarray*}\\
      \indent
      In 2010, Cheng, Ichikawa and Mametsuka \cite{Cheng} proved that if  $M$  is an  $n$-dimensional submanifold isometrically immersed in a Euclidean space with mean curvature vector  $\mathbf{H}$ , then
      \begin{eqnarray}\label{Cheng's function}
      	  \sum_{i=1}^{k}\left(\Gamma_{k+1}-\Gamma_{i}\right)^{2} \leq \frac{1}{n^{2}} \sum_{i=1}^{k}\left(\Gamma_{k+1}-\Gamma_{i}\right)\left(n^{2} H_{0}^{2}+(2 n+4) \Gamma_{i}^{1 / 2}\right)\left(n^{2} H_{0}^{2}+4 \Gamma_{i}^{1 / 2}\right),
      \end{eqnarray}
       where  $H_{0}=\sup _{\Omega}|\mathbf{H}|$.\\
       \indent
        In 2011, Wang and Xia \cite{Wang} strengthened the inequality (\ref{Cheng's function}) by Cheng, Ichikawa and Mametsuka and obtained
        \begin{align}
        	\sum_{i=1}^{k}\left(\Gamma_{k+1}-\Gamma_{i}\right)^{2} \leq & \frac{1}{n}\left\{\sum_{i=1}^{k}\left(\Gamma_{k+1}-\Gamma_{i}\right)^{2}\left(n^{2} H_{0}^{2}+(2 n+4) \Gamma_{i}^{1 / 2}\right)\right\}^{1 / 2} \nonumber\\
        	& \times\left\{\sum_{i=1}^{k}\left(\Gamma_{k+1}-\Gamma_{i}\right)\left(n^{2} H_{0}^{2}+4 \Gamma_{i}^{1 / 2}\right)\right\}^{1 / 2}.
        \end{align}
        \\
        \indent
        In this paper we study eigenvalues of the Dirichlet eigenvalue problem and the clamped plate problem on compact domains in complete Riemannian manifolds.
        When the manifolds are isometrically immersed in a Euclidean space,
        by using Sobolev type inequalities, we obtain several new  inequalities which involve  integral of the mean curvature vector field. We have
	\begin{thm}\label{Theorem 1.1}
		Let $(M,g)$ be an $n$-dimensional complete Riemannian manifold $(n\ge 3)$ and let $\Omega $ be a bounded domain with smooth boundary $\partial \Omega$ in $M$. Let $ \mathcal{D}(M) $ denotes the space of $ C^{\infty} $ functions with compact support in $M$. Assume that $M$ is isometrically immersed in $R^m$ with mean curvature vector $\mathbf{H}$, and there exists a constant $C_{1}>0$  such that for any  $u \in \mathcal{D}(M)$ ,
		\begin{eqnarray}
			\left(\int_{M}|u|^{2 n /(n-2)} \right)^{(n-2) / n} \leq C_{1} \int_{M}|\nabla u|^{2}.
		\end{eqnarray}
		Denote by $\lambda_{i}$ the $ith$ eigenvalue of the problem (\ref{Dirichlet problem}).
	Then
	\begin{eqnarray}\label{first result}
		n \sum_{i=1}^{k}\left(\lambda_{k+1}-\lambda_{i}\right)^{2} \leq \left(4+n^{2}C_1\||\mathbf{H}|\|^2_{L^n(\Omega)}\right) \sum_{i=1}^{k}\left(\lambda_{k+1}-\lambda_{i}\right)\lambda_{i},
	\end{eqnarray}
where $\||\mathbf{H}|\|_{L^n(\Omega)}=(\int_\Omega|H|^ndvol_g)^\frac{1}{n}.$
	\end{thm}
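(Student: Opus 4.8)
The plan is to run the classical Rayleigh--Ritz / commutator argument of Yang and of Chen--Cheng, but to \emph{keep} the curvature contribution in the integral form $\int_\Omega|\mathbf H|^2u_i^2$ instead of estimating it by $\sup_\Omega|\mathbf H|^2$, and then to absorb that integral into $\lambda_i$ by Hölder's inequality together with the hypothesized Sobolev inequality. Throughout, let $u_1,u_2,\dots$ be an $L^2(\Omega)$-orthonormal family of eigenfunctions with $\Delta u_i=-\lambda_iu_i$, and let $x=(x_1,\dots,x_m)\colon M\to\mathbb R^m$ be the isometric immersion.

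For $1\le i\le k$ and $1\le\alpha\le m$ I would set $\psi_{i\alpha}=x_\alpha u_i-\sum_{j=1}^k a_{ij}^\alpha u_j$ with $a_{ij}^\alpha=\int_\Omega x_\alpha u_iu_j$, so that $\psi_{i\alpha}\in H^1_0(\Omega)$ is orthogonal to $u_1,\dots,u_k$; the variational characterization of $\lambda_{k+1}$ then gives $\lambda_{k+1}\|\psi_{i\alpha}\|^2\le\int_\Omega|\nabla\psi_{i\alpha}|^2$. Expanding $\Delta(x_\alpha u_i)=(\Delta x_\alpha)u_i+2\nabla x_\alpha\cdot\nabla u_i-\lambda_ix_\alpha u_i$ and integrating by parts in the standard way produces
\[
(\lambda_{k+1}-\lambda_i)\|\psi_{i\alpha}\|^2\le\int_\Omega|\nabla x_\alpha|^2u_i^2+\sum_{j=1}^k a_{ij}^\alpha B_{ij}^\alpha,\qquad B_{ij}^\alpha=\int_\Omega\big(2\nabla x_\alpha\cdot\nabla u_i+(\Delta x_\alpha)u_i\big)u_j=(\lambda_i-\lambda_j)a_{ij}^\alpha .
\]
Writing $h_{i\alpha}=2\nabla x_\alpha\cdot\nabla u_i+(\Delta x_\alpha)u_i$ and $\tilde h_{i\alpha}=h_{i\alpha}-\sum_{j\le k}B_{ij}^\alpha u_j$, the right-hand side above equals $-\langle h_{i\alpha},\psi_{i\alpha}\rangle=-\langle\tilde h_{i\alpha},\psi_{i\alpha}\rangle$, so combining with the displayed inequality and Cauchy--Schwarz yields Yang's sharpened bound $(\lambda_{k+1}-\lambda_i)\big(\int_\Omega|\nabla x_\alpha|^2u_i^2+\sum_j a_{ij}^\alpha B_{ij}^\alpha\big)\le\|\tilde h_{i\alpha}\|^2=\|h_{i\alpha}\|^2-\sum_j(B_{ij}^\alpha)^2$.

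Now I sum over $\alpha$. Because $x$ is isometric, $\sum_\alpha|\nabla x_\alpha|^2=n$ and $\sum_\alpha(\nabla x_\alpha\cdot\nabla u_i)^2=|\nabla u_i|^2$; since $\Delta x$ is the (unnormalized) mean curvature vector, which is normal to $M$, the cross term $\sum_\alpha(\nabla x_\alpha\cdot\nabla u_i)(\Delta x_\alpha)$ vanishes and $\sum_\alpha(\Delta x_\alpha)^2=n^2|\mathbf H|^2$. Hence $\sum_\alpha\|h_{i\alpha}\|^2=4\lambda_i+n^2\int_\Omega|\mathbf H|^2u_i^2$, and the summed inequality reads $n(\lambda_{k+1}-\lambda_i)\le 4\lambda_i+n^2\int_\Omega|\mathbf H|^2u_i^2-\sum_{\alpha,j}(B_{ij}^\alpha)^2-(\lambda_{k+1}-\lambda_i)\sum_{\alpha,j}a_{ij}^\alpha B_{ij}^\alpha$. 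Multiplying by $(\lambda_{k+1}-\lambda_i)$ and summing over $i$, the two error sums combine, after substituting $B_{ij}^\alpha=(\lambda_i-\lambda_j)a_{ij}^\alpha$, into $\sum_{i,j,\alpha}(a_{ij}^\alpha)^2(\lambda_{k+1}-\lambda_i)(\lambda_{k+1}-\lambda_j)(\lambda_i-\lambda_j)$, which is antisymmetric under $i\leftrightarrow j$ and therefore vanishes. This gives the intermediate estimate
\[
n\sum_{i=1}^k(\lambda_{k+1}-\lambda_i)^2\le\sum_{i=1}^k(\lambda_{k+1}-\lambda_i)\Big(4\lambda_i+n^2\int_\Omega|\mathbf H|^2u_i^2\Big).
\]

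It remains to control the curvature term. By Hölder's inequality with exponents $n/2$ and $n/(n-2)$,
\[
\int_\Omega|\mathbf H|^2u_i^2\le\Big(\int_\Omega|\mathbf H|^n\Big)^{2/n}\Big(\int_\Omega|u_i|^{2n/(n-2)}\Big)^{(n-2)/n},
\]
and, since the zero-extension of $u_i$ lies in the $H^1$-closure of $\mathcal D(M)$, the hypothesized Sobolev inequality applies and gives $\big(\int_\Omega|u_i|^{2n/(n-2)}\big)^{(n-2)/n}\le C_1\int_M|\nabla u_i|^2=C_1\lambda_i$. Thus $n^2\int_\Omega|\mathbf H|^2u_i^2\le n^2C_1\||\mathbf H|\|_{L^n(\Omega)}^2\,\lambda_i$, and inserting this into the intermediate estimate yields exactly \eqref{first result}. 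The argument has no deep obstacle; the places demanding care are the bookkeeping in the Yang-type step --- in particular checking that the error terms cancel exactly upon summation over $i$, which hinges on $a_{ij}^\alpha=a_{ji}^\alpha$ and $B_{ij}^\alpha=-B_{ji}^\alpha$ --- the use of the normality of the mean curvature vector to kill the cross term, and the density argument that lets the Sobolev inequality, stated for $\mathcal D(M)$, be applied to the eigenfunctions $u_i\in H^1_0(\Omega)$.
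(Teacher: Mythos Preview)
Your proof is correct and follows essentially the same route as the paper: use the coordinate functions $x_\alpha$ of the immersion as multipliers to obtain the intermediate inequality
\[
n\sum_{i=1}^k(\lambda_{k+1}-\lambda_i)^2\le\sum_{i=1}^k(\lambda_{k+1}-\lambda_i)\Big(4\lambda_i+n^2\int_\Omega|\mathbf H|^2u_i^2\Big),
\]
and then bound $\int_\Omega|\mathbf H|^2u_i^2\le C_1\||\mathbf H|\|_{L^n(\Omega)}^2\lambda_i$ via H\"older with exponents $n/2,\ n/(n-2)$ and the assumed Sobolev inequality. The only cosmetic difference is that the paper reaches the intermediate inequality by quoting the Ashbaugh--Hermi commutator lemma $\sum_i(\lambda_{k+1}-\lambda_i)^2\langle[-\Delta,F]u_i,Fu_i\rangle\le\sum_i(\lambda_{k+1}-\lambda_i)\|[-\Delta,F]u_i\|^2$ with $F=X_\alpha$, whereas you carry out the equivalent Yang-type trial-function computation directly; the two derivations are interchangeable, and the final H\"older--Sobolev step is identical.
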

	\begin{thm}\label{Theorem 1.2}
        Under the same assumption as in the Theorem 1.1, and denote by $\Gamma_{i}$ the $ith$ eigenvalue of the clamped plate problem (\ref{clamped plate problem}).
	Then
	\begin{align}\label{second result}
		\sum_{i=1}^{k}\left(\Gamma_{k+1}-\Gamma_{i}\right)^{2} \leq & \frac{1}{n}\left\{\sum_{i=1}^{k}\left(\Gamma_{k+1}-\Gamma_{i}\right)^{2}\left(n^{2}C_{1}\||\mathbf{H}|\|^2_{L^n(\Omega)}+2n+4) \Gamma_{i}^{1 / 2}\right)\right\}^{1 / 2} \nonumber\\
		& \times\left\{\sum_{i=1}^{k}\left(\Gamma_{k+1}-\Gamma_{i}\right)\left(n^{2}C_{1}\||\mathbf{H}|\|^2_{L^n(\Omega)}+4)
		\Gamma_{i}^{1 / 2}\right)\right\}^{1 / 2}.
	\end{align}
	\end{thm}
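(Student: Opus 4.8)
\noindent\textbf{Plan of proof of Theorem \ref{Theorem 1.2}.}
The plan is to run the commutator (Rayleigh--Ritz) argument of Cheng--Yang \cite{Cheng2}, in the submanifold form used by Cheng--Ichikawa--Mametsuka \cite{Cheng} and Wang--Xia \cite{Wang}, and to replace at the very last step their pointwise bound $|\mathbf{H}|\le H_0$ by H\"older's inequality together with the assumed Sobolev inequality. Fix an $L^2(\Omega)$-orthonormal family $u_1,u_2,\dots$ of eigenfunctions of (\ref{clamped plate problem}), so that $\Delta^2u_i=\Gamma_iu_i$ and $u_i|_{\partial\Omega}=\frac{\partial u_i}{\partial\nu}\Big|_{\partial\Omega}=0$, and let $x=(x_1,\dots,x_m):M\to R^m$ be the isometric immersion. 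For $1\le i\le k$ and $1\le\alpha\le m$ set
$$\phi_{\alpha i}=x_\alpha u_i-\sum_{j=1}^k a_{\alpha ij}u_j,\qquad a_{\alpha ij}=\int_\Omega x_\alpha u_iu_j\,dvol_g.$$
Since $u_i$ and $\nabla u_i$ vanish on $\partial\Omega$, the same holds for $x_\alpha u_i$, so $\phi_{\alpha i}\in H^2_0(\Omega)$; moreover $\langle\phi_{\alpha i},u_j\rangle=0$ for $1\le j\le k$ by the choice of $a_{\alpha ij}$. Hence $\phi_{\alpha i}$ is admissible for $\Gamma_{k+1}$, and since $\Delta^2(x_\alpha u_i)-\Gamma_ix_\alpha u_i$ differs from $\Delta^2\phi_{\alpha i}-\Gamma_i\phi_{\alpha i}$ only by a linear combination of $u_1,\dots,u_k$, the Rayleigh--Ritz inequality yields
$$(\Gamma_{k+1}-\Gamma_i)\,\|\phi_{\alpha i}\|^2\le\big\langle[\Delta^2,x_\alpha]u_i,\ \phi_{\alpha i}\big\rangle,$$
where $[\Delta^2,x_\alpha]u_i:=\Delta^2(x_\alpha u_i)-x_\alpha\Delta^2u_i$.

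The next step is the standard commutator computation. Write $T_{\alpha i}:=[\Delta,x_\alpha]u_i=2\nabla x_\alpha\cdot\nabla u_i+(\Delta x_\alpha)u_i$, so that $[\Delta^2,x_\alpha]u_i=\Delta T_{\alpha i}+[\Delta,x_\alpha](\Delta u_i)$. Using $\langle\Delta^2v,v\rangle=\|\Delta v\|^2$ for $v\in H^2_0(\Omega)$, the orthogonality relations, and integration by parts (no boundary terms occur, as $u_i$ and $\nabla u_i$ vanish on $\partial\Omega$), the lower-order contributions cancel in pairs and one is left with
$$\big\langle[\Delta^2,x_\alpha]u_i,\ x_\alpha u_i\big\rangle=\|\Delta(x_\alpha u_i)\|^2-\Gamma_i\|x_\alpha u_i\|^2=-2\int_\Omega|\nabla x_\alpha|^2\,u_i\,\Delta u_i+\int_\Omega T_{\alpha i}^2.$$
Summing this over $\alpha$ and invoking the identities valid for an isometric immersion,
\begin{gather*}
\sum_{\alpha=1}^m|\nabla x_\alpha|^2\equiv n,\qquad\sum_{\alpha=1}^m(\Delta x_\alpha)\,\nabla x_\alpha=0,\\
\sum_{\alpha=1}^m(\nabla x_\alpha\cdot\nabla u_i)^2=|\nabla u_i|^2,\qquad\sum_{\alpha=1}^m(\Delta x_\alpha)^2=n^2|\mathbf{H}|^2,
\end{gather*}
one finds that $\sum_\alpha\int_\Omega|\nabla x_\alpha|^2u_i^2=n$, that $\sum_\alpha\int_\Omega T_{\alpha i}^2=n^2\int_\Omega|\mathbf{H}|^2u_i^2+4\int_\Omega|\nabla u_i|^2$, and hence that $\sum_\alpha\langle[\Delta^2,x_\alpha]u_i,x_\alpha u_i\rangle=n^2\int_\Omega|\mathbf{H}|^2u_i^2+(2n+4)\int_\Omega|\nabla u_i|^2$.

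Feeding these three summed quantities — the constant $n$, which will supply the factor $\tfrac1n$; the $(\Gamma_{k+1}-\Gamma_i)^2$-weighted quantity $\sum_\alpha\langle[\Delta^2,x_\alpha]u_i,x_\alpha u_i\rangle$; and the $(\Gamma_{k+1}-\Gamma_i)$-weighted quantity $\sum_\alpha\int_\Omega T_{\alpha i}^2$ — into the Cheng--Yang recursion for the clamped plate \cite{Cheng2}, as carried out in \cite{Cheng,Wang}, and discarding the nonnegative terms $\sum_{j\le k}\langle T_{\alpha i},u_j\rangle^2$, a Cauchy--Schwarz step over $i$ yields
\begin{align*}
n\sum_{i=1}^k(\Gamma_{k+1}-\Gamma_i)^2\le{}&\Big\{\sum_{i=1}^k(\Gamma_{k+1}-\Gamma_i)^2\Big(n^2\!\int_\Omega|\mathbf{H}|^2u_i^2+(2n+4)\!\int_\Omega|\nabla u_i|^2\Big)\Big\}^{1/2}\\
&{}\times\Big\{\sum_{i=1}^k(\Gamma_{k+1}-\Gamma_i)\Big(n^2\!\int_\Omega|\mathbf{H}|^2u_i^2+4\!\int_\Omega|\nabla u_i|^2\Big)\Big\}^{1/2},
\end{align*}
which is precisely Wang--Xia's inequality with their pointwise quantity $H_0^2$ replaced by $\int_\Omega|\mathbf{H}|^2u_i^2$. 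It remains only to estimate $\int_\Omega|\nabla u_i|^2$ and $\int_\Omega|\mathbf{H}|^2u_i^2$. From $\int_\Omega(\Delta u_i)^2=\int_\Omega u_i\Delta^2u_i=\Gamma_i$ and integration by parts, $\int_\Omega|\nabla u_i|^2=-\int_\Omega u_i\Delta u_i\le\big(\int_\Omega u_i^2\big)^{1/2}\big(\int_\Omega(\Delta u_i)^2\big)^{1/2}=\Gamma_i^{1/2}$; and applying H\"older's inequality with the conjugate exponents $\tfrac n2$ and $\tfrac n{n-2}$, then the Sobolev inequality to the zero-extension of $u_i$ (which lies in the $H^1$-closure of $\mathcal D(M)$),
\begin{align*}
\int_\Omega|\mathbf{H}|^2u_i^2&\le\Big(\int_\Omega|\mathbf{H}|^n\Big)^{2/n}\Big(\int_\Omega|u_i|^{2n/(n-2)}\Big)^{(n-2)/n}\\
&\le C_1\,\||\mathbf{H}|\|_{L^n(\Omega)}^2\int_\Omega|\nabla u_i|^2\ \le\ C_1\,\||\mathbf{H}|\|_{L^n(\Omega)}^2\,\Gamma_i^{1/2}.
\end{align*}
Substituting these bounds turns $n^2\int_\Omega|\mathbf{H}|^2u_i^2+(2n+4)\int_\Omega|\nabla u_i|^2$ into $\big(n^2C_1\||\mathbf{H}|\|_{L^n(\Omega)}^2+2n+4\big)\Gamma_i^{1/2}$ and $n^2\int_\Omega|\mathbf{H}|^2u_i^2+4\int_\Omega|\nabla u_i|^2$ into $\big(n^2C_1\||\mathbf{H}|\|_{L^n(\Omega)}^2+4\big)\Gamma_i^{1/2}$, and dividing the displayed inequality by $n$ gives (\ref{second result}).

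I expect the main obstacle to be not the Sobolev estimate but the verification that the commutator bookkeeping closes. One must check that, after summing over $\alpha$, the isometric-immersion identities confine \emph{all} curvature dependence to $\int_\Omega|\mathbf{H}|^2u_i^2$ with exactly the coefficient $n^2$: any residual term such as $\int_\Omega|\mathbf{H}|^2|\nabla u_i|^2$ or $\int_\Omega\langle\mathbf{H},x\rangle\,u_i\,\Delta u_i$ would not be controllable by the assumed Sobolev inequality alone, so the pairwise cancellations in the commutator computation must be tracked carefully. One must likewise check that the Cheng--Yang recursion step for $\Delta^2$ does not, through its Cauchy--Schwarz estimates, reintroduce unbounded quantities such as $\int_\Omega|x|^2(\Delta u_i)^2$; as in \cite{Wang} the relevant combinations should reorganize entirely in terms of $\int_\Omega T_{\alpha i}^2$, $\|\phi_{\alpha i}\|^2$ and the coefficients $\langle T_{\alpha i},u_j\rangle$, but this is the step at which one must follow the existing proof most attentively.
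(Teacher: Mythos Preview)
Your proposal is correct and follows essentially the same route as the paper: the paper invokes the Wang--Xia lemma (your ``Cheng--Yang recursion'') with $h=X_\alpha$, sums over $\alpha$ using the isometric-immersion identities to reach the inequality with $\int_\Omega|\mathbf{H}|^2u_i^2$ in place of $H_0^2$, and then bounds this integral via H\"older plus the assumed Sobolev inequality together with $\int_\Omega|\nabla u_i|^2\le\Gamma_i^{1/2}$, exactly as you do. The only cosmetic difference is that the paper carries a free parameter $\delta$ through Lemma~4.1 and optimizes it at the end, whereas you phrase the same step as Cauchy--Schwarz; the two are equivalent.
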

Since $\||\mathbf{H}|\|_{L^n(\Omega)}\leq\||\mathbf{H}|\|_{L^n(M)}$, directly from Theorems 1.1, 1.2 we obtain the following universal inequalities.
\begin{cor}
		Let $(M,g)$ be an $n$-dimensional complete Riemannian manifold $(n\ge 3)$ and let $\Omega $ be a bounded domain with smooth boundary $\partial \Omega$ in $M$. Let $ \mathcal{D}(M) $ denotes the space of $ C^{\infty} $ functions with compact support in $M$. Assume that $M$ is isometrically immersed in $R^m$ with mean curvature vector $\mathbf{H}$, and there exists a constant $C_{1}>0$  such that for any  $u \in \mathcal{D}(M)$ ,
		\begin{eqnarray*}
			\left(\int_{M}|u|^{2 n /(n-2)} \right)^{(n-2) / n} \leq C_{1} \int_{M}|\nabla u|^{2}.
		\end{eqnarray*}
Denote by $\lambda_{i}$ the $ith$ eigenvalue of the problem (\ref{Dirichlet problem}).
	Then
	\begin{eqnarray*}	
		n \sum_{i=1}^{k}\left(\lambda_{k+1}-\lambda_{i}\right)^{2} \leq \left(4+n^{2}C_1\||\mathbf{H}|\|^2_{L^n(M)}\right) \sum_{i=1}^{k}\left(\lambda_{k+1}-\lambda_{i}\right)\lambda_{i}.
	\end{eqnarray*}
\end{cor}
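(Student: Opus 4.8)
The plan is to adapt the classical commutator method of Payne--P\'olya--Weinberger and Yang to the submanifold setting, with the curvature term produced by the Sobolev inequality rather than by a pointwise bound on $|\mathbf{H}|$. Let $u_1,u_2,\dots$ be an $L^2(\Omega)$-orthonormal basis of Dirichlet eigenfunctions with $\Delta u_i=-\lambda_i u_i$. Since $M$ is isometrically immersed in $\mathbb{R}^m$, write the immersion as $x=(x_1,\dots,x_m)$ and recall the Beltrami-type identity $\Delta x_\alpha = \mathbf{H}_\alpha$ (the $\alpha$-th component of the mean curvature vector), together with $\sum_\alpha |\nabla x_\alpha|^2 = n$ and $\sum_\alpha (\Delta x_\alpha)^2 = |\mathbf{H}|^2$, the latter being interpreted with the appropriate normalization of $\mathbf{H}$. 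For each coordinate function $x_\alpha$ use as trial functions $\phi_{\alpha i} = x_\alpha u_i - \sum_{j=1}^k a_{\alpha i j} u_j$, where $a_{\alpha i j}=\int_\Omega x_\alpha u_i u_j$ are chosen so that $\phi_{\alpha i}\perp u_1,\dots,u_k$ in $L^2(\Omega)$; these vanish on $\partial\Omega$ and hence are admissible in the Rayleigh--Ritz characterization of $\lambda_{k+1}$.

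The next step is the standard chain of manipulations. Feeding $\phi_{\alpha i}$ into $\lambda_{k+1}\int \phi_{\alpha i}^2 \le \int |\nabla\phi_{\alpha i}|^2$, one computes $\int|\nabla\phi_{\alpha i}|^2 = \lambda_i\|\phi_{\alpha i}\|^2 + \int(-2\nabla x_\alpha\cdot\nabla u_i - u_i\Delta x_\alpha)x_\alpha u_i + (\text{correction})$, and after summing over $\alpha$ and using $\sum_\alpha|\nabla x_\alpha|^2=n$ one arrives at an inequality of the form
\begin{eqnarray*}
(\lambda_{k+1}-\lambda_i)\|\phi_i\|^2 \le \sum_\alpha\left( 2\int x_\alpha u_i \nabla x_\alpha\cdot\nabla u_i + \int x_\alpha u_i^2\,\mathbf{H}_\alpha\right) + \text{lower order},
\end{eqnarray*}
where $\|\phi_i\|^2=\sum_\alpha\|\phi_{\alpha i}\|^2$. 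Multiplying by $(\lambda_{k+1}-\lambda_i)$, summing over $i=1,\dots,k$, and applying the Cauchy--Schwarz inequality to split the cross terms is routine; the point where the novelty enters is the estimation of the mean-curvature term $\sum_i(\lambda_{k+1}-\lambda_i)^2\int_\Omega |\mathbf{H}|^2 u_i^2$-type expressions. Here, instead of bounding $|\mathbf{H}|^2$ by $H_0^2$, apply H\"older with exponents $n/2$ and $n/(n-2)$ to get $\int_\Omega|\mathbf{H}|^2 u_i^2 \le \||\mathbf{H}|\|_{L^n(\Omega)}^2\,\|u_i\|_{L^{2n/(n-2)}(\Omega)}^2$, and then invoke the Sobolev inequality (with $u_i$ extended by zero outside $\Omega$, so $u_i\in\mathcal{D}(M)$ up to approximation) to bound $\|u_i\|_{L^{2n/(n-2)}}^2 \le C_1\int_\Omega|\nabla u_i|^2 = C_1\lambda_i$. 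This converts the curvature contribution into $C_1\||\mathbf{H}|\|_{L^n(\Omega)}^2\lambda_i$, which is exactly the shape appearing in \eqref{first result}.

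The final step assembles the pieces: after the Cauchy--Schwarz splitting one obtains $n\sum_i(\lambda_{k+1}-\lambda_i)^2 \le \big(\sum_i(\lambda_{k+1}-\lambda_i)^2\big)^{1/2}\big(\sum_i(\lambda_{k+1}-\lambda_i)(4\lambda_i + n^2 C_1\||\mathbf{H}|\|_{L^n(\Omega)}^2\lambda_i)\big)^{1/2}$ after using $\|\phi_i\|^2 \ge$ (something controlling $\sum_i(\lambda_{k+1}-\lambda_i)$ via the antisymmetry of the coefficient identities $\sum_j a_{\alpha ij}b_{\alpha ij}$ in Yang's argument), and squaring gives \eqref{first result}; then the corollary is immediate from $\||\mathbf{H}|\|_{L^n(\Omega)}\le\||\mathbf{H}|\|_{L^n(M)}$. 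The main obstacle I anticipate is bookkeeping the correction terms coming from the projection coefficients $a_{\alpha ij}$ and verifying that the ``Yang-type'' rearrangement still closes after the Sobolev substitution — in particular making sure the substitution $\|u_i\|_{L^{2n/(n-2)}}^2\le C_1\lambda_i$ is applied to the right quantity (it must be $u_i$ itself, whose Dirichlet energy is $\lambda_i$, not a product like $x_\alpha u_i$) and that the density of $\mathcal{D}(M)$ functions lets us apply the hypothesis to $u_i$ extended by zero. A secondary technical point is fixing the normalization constant in $\sum_\alpha(\Delta x_\alpha)^2=|\mathbf{H}|^2$ versus $n^2|\mathbf{H}|^2$ so that the constant $n^2$ in front of $C_1\||\mathbf{H}|\|_{L^n(\Omega)}^2$ comes out correctly.
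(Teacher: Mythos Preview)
Your approach is essentially the paper's: the commutator/trial-function method with coordinate functions $X_\alpha$, followed by the H\"older--Sobolev bound on $\int_\Omega|\mathbf H|^2u_i^2$ and then the trivial inequality $\||\mathbf H|\|_{L^n(\Omega)}\le\||\mathbf H|\|_{L^n(M)}$. The paper streamlines the bookkeeping by invoking the Ashbaugh--Hermi commutator lemma
\[
\sum_{i=1}^k(\lambda_{k+1}-\lambda_i)^2\langle[-\Delta,F]u_i,Fu_i\rangle_{L^2}\le\sum_{i=1}^k(\lambda_{k+1}-\lambda_i)\|[-\Delta,F]u_i\|_{L^2}^2
\]
as a black box, which encapsulates exactly the projection-coefficient manipulations you outline. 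Taking $F=X_\alpha$, summing over $\alpha$, and using $\sum_\alpha|\nabla X_\alpha|^2=n$, $\Delta X=n\mathbf H$ (so $\sum_\alpha(\Delta X_\alpha)^2=n^2|\mathbf H|^2$, which answers your normalization worry), and the orthogonality $\langle\mathbf H,\nabla u_i\rangle=0$ gives \emph{directly}
\[
n\sum_{i=1}^k(\lambda_{k+1}-\lambda_i)^2\le\sum_{i=1}^k(\lambda_{k+1}-\lambda_i)\Big(n^2\!\int_\Omega|\mathbf H|^2u_i^2+4\lambda_i\Big).
\]

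The one genuine slip is your ``final assembly'' paragraph. There is no Cauchy--Schwarz splitting into a product of two square roots in the Laplacian case; the displayed inequality above drops out of the commutator identity without any further optimization. The form you wrote, once squared, would yield $n^2\sum(\lambda_{k+1}-\lambda_i)^2\le(4+n^2C_1\||\mathbf H|\|_{L^n}^2)\sum(\lambda_{k+1}-\lambda_i)\lambda_i$, which is off by a factor of $n$ from the claim. You appear to be conflating this with the biharmonic argument, where an auxiliary parameter $\delta$ is optimized and does produce a product of square roots. Drop that step; once you have the inequality above, substitute $\int_\Omega|\mathbf H|^2u_i^2\le C_1\||\mathbf H|\|_{L^n(\Omega)}^2\lambda_i$ and you are done.
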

\begin{cor}
       Under the same assumption as in the Theorem 1.1, and denote by $\Gamma_{i}$ the $ith$ eigenvalue of the clamped plate problem (\ref{clamped plate problem}).
	Then
	\begin{align*}
		\sum_{i=1}^{k}\left(\Gamma_{k+1}-\Gamma_{i}\right)^{2} \leq & \frac{1}{n}\left\{\sum_{i=1}^{k}\left(\Gamma_{k+1}-\Gamma_{i}\right)^{2}\left(n^{2}C_{1}\||\mathbf{H}|\|^2_{L^n(M)}+2n+4) \Gamma_{i}^{1 / 2}\right)\right\}^{1 / 2} \nonumber\\
		& \times\left\{\sum_{i=1}^{k}\left(\Gamma_{k+1}-\Gamma_{i}\right)\left(n^{2}C_{1}\||\mathbf{H}|\|^2_{L^n(M)}+4)
		\Gamma_{i}^{1 / 2}\right)\right\}^{1 / 2}.
	\end{align*}
\end{cor}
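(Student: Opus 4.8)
The plan is to obtain this corollary directly from Theorem 1.2 (and, for the statement involving $\lambda_i$, from Theorem 1.1), so that essentially nothing beyond one monotonicity remark is required. Since $\Omega$ is a subdomain of $M$ and the integrand $|\mathbf{H}|^n$ is nonnegative, one has $\int_{\Omega}|\mathbf{H}|^n\,dvol_g \le \int_{M}|\mathbf{H}|^n\,dvol_g$, and therefore $\||\mathbf{H}|\|_{L^n(\Omega)}\le \||\mathbf{H}|\|_{L^n(M)}$; if the right-hand side is infinite the asserted inequality is vacuous, so we may assume it is finite. Recording this inequality is the first (and only essential) step.

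The second step is a bookkeeping substitution into (\ref{second result}). For $1\le i\le k$ the quantities $\Gamma_{k+1}-\Gamma_i$, $(\Gamma_{k+1}-\Gamma_i)^2$ and $\Gamma_i^{1/2}$ are all nonnegative, so each of the two bracketed sums on the right-hand side of (\ref{second result}) is a sum of nonnegative terms in which the quantity $n^2C_1\||\mathbf{H}|\|^2_{L^n(\Omega)}$ appears only with nonnegative coefficients. Replacing $\||\mathbf{H}|\|^2_{L^n(\Omega)}$ by the larger quantity $\||\mathbf{H}|\|^2_{L^n(M)}$ can only enlarge each bracketed sum, and since $x\mapsto x^{1/2}$ is nondecreasing the entire right-hand side does not decrease. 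Chaining this with the inequality of Theorem 1.2 yields exactly the claimed bound, and the parallel argument applied to Theorem 1.1 gives the corresponding statement for the eigenvalues $\lambda_i$ (the first corollary).

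It is worth pointing out a self-contained alternative that does not treat Theorem 1.2 as a black box: one simply repeats its proof but applies the Sobolev hypothesis on all of $M$ rather than on $\Omega$. Indeed, if $u$ is an $L^2$-normalized eigenfunction of (\ref{clamped plate problem}) with eigenvalue $\Gamma_i$, extended by zero to $M$ (the extension lies in $W^{1,2}(M)$, which suffices for the Sobolev inequality by density), then Hölder's inequality and the hypothesis give $\int_{M}|\mathbf{H}|^2u^2 \le \||\mathbf{H}|\|^2_{L^n(M)}\big(\int_{M}|u|^{2n/(n-2)}\big)^{(n-2)/n}\le C_1\||\mathbf{H}|\|^2_{L^n(M)}\int_{M}|\nabla u|^2$, and combined with $\int_{M}|\nabla u|^2\le \Gamma_i^{1/2}$ and the identity expressing $\Delta\mathbf{x}$ through the mean curvature vector of the immersion, this is precisely the estimate that produces the term $n^2C_1\||\mathbf{H}|\|^2_{L^n(M)}\Gamma_i^{1/2}$ in the final inequality.

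As for the main obstacle: there is essentially none of substance, since the corollary is a formal consequence of Theorem 1.2. The only point deserving a line of care is the reduction just described, namely checking that the zero-extension of a clamped eigenfunction is admissible in the Sobolev inequality on $M$; once this is granted, the proof collapses to the one-line monotonicity substitution above.
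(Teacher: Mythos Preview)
Your proposal is correct and follows exactly the paper's approach: the paper simply records that $\||\mathbf{H}|\|_{L^n(\Omega)}\le \||\mathbf{H}|\|_{L^n(M)}$ and states that the corollary follows directly from Theorem~1.2 by this monotonicity. Your additional justification of why the right-hand side is nondecreasing in $\||\mathbf{H}|\|_{L^n(\Omega)}$ and your alternative self-contained argument are valid but unnecessary elaborations.
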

	\begin{rem}
		Let  $(M, g)$  be a complete Riemannian $n$-manifold of infinite volume. Let $ \mathcal{D}(M) $ be the space of $ C^{\infty} $ functions with compact support in $ M$ and $\lambda_{1}^{D}(M) $ denotes the first eigenvalue of the Laplacian for the Dirichlet problem on $\Omega$.The following two statements are equivalent.\\
		(1) There exists  $C_{0}>0$  such that for any  $u \in \mathcal{D}(M)$ ,
		\begin{eqnarray*}
			\left(\int_{M}|u|^{2 n /(n-2)} \right)^{(n-2) / n} \leq C_{0} \int_{M}|\nabla u|^{2}
		\end{eqnarray*}
		(2) There exists  $\Lambda>0$ such that for any $ \Omega \subset \subset M $, $\lambda_{1}^{D}(\Omega) \geq \Lambda V o l_{g}(\Omega)^{-2 / n}$ .
	\end{rem}
	\begin{thm}\label{Theoerm 1.4}
	Let $(M,g)$ be an  $n$-dimensional complete Riemannian manifold and  $\Omega $  a bounded domain with smooth boundary $\partial \Omega$ in $M$. Assume that $M$ is isometrically immersed in $R^m$ with mean curvature vector $\mathbf{H}$. Suppose that $ \||\mathbf{H}|\|_{L^n(\Omega)} C_{2}<1$ , where $ C_{2} $ is the constant in Michael-Simon's Sobolev inequality. Denote by $\lambda_{i}$ the $ith$ eigenvalue of the problem (\ref{Dirichlet problem}).
	Then
		\begin{eqnarray}	\label{third result}
		n \sum_{i=1}^{k}\left(\lambda_{k+1}-\lambda_{i}\right)^{2} \leq \left(4+n^{2}C_s\left\||\mathbf{H}|\right\|_{L^n(\Omega)}^2\right) \sum_{i=1}^{k}\left(\lambda_{k+1}-\lambda_{i}\right)\lambda_{i},
	\end{eqnarray}
	where  $C_{s}=\left(\frac{C_{2}}{1-\||\mathbf{H}|\|_{L^n(\Omega)} C_{2}} \frac{2(n-1)}{n-2}\right)^{2} $.
	\end{thm}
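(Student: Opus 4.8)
The plan is to follow the proof of Theorem \ref{Theorem 1.1} essentially verbatim, the only difference being which Sobolev inequality is invoked to control the term involving the mean curvature. Recall that for an $n$-dimensional submanifold $M\subset\R^m$, the Michael–Simon Sobolev inequality states that for all $u\in\mathcal D(M)$,
\begin{eqnarray*}
\left(\int_M|u|^{n/(n-1)}\right)^{(n-1)/n}\leq C_2\int_M\left(|\nabla u|+|\mathbf H|\,|u|\right),
\end{eqnarray*}
for a dimensional constant $C_2$. Applying this to $u=|v|^{2(n-1)/(n-2)}$ and using Hölder's inequality with the hypothesis $\||\mathbf H|\|_{L^n(\Omega)}C_2<1$, one absorbs the mean-curvature term on the left and arrives, after the standard bookkeeping, at a pure Sobolev inequality of the form
\begin{eqnarray*}
\left(\int_\Omega|v|^{2n/(n-2)}\right)^{(n-2)/n}\leq C_s\int_\Omega|\nabla v|^2,\qquad C_s=\left(\frac{C_2}{1-\||\mathbf H|\|_{L^n(\Omega)}C_2}\,\frac{2(n-1)}{n-2}\right)^2,
\end{eqnarray*}
valid for all $v\in\mathcal D(\Omega)$ (in particular for the eigenfunctions, which vanish on $\partial\Omega$). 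This is precisely the hypothesis of Theorem \ref{Theorem 1.1} with $C_1$ replaced by $C_s$.

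Once this inequality is in hand, I would run the commutator argument of Payne–Pólya–Weinberger/Yang. Let $u_i$ be an orthonormal basis of eigenfunctions for (\ref{Dirichlet problem}) and let $x_1,\dots,x_m$ be the Euclidean coordinate functions of the immersion $M\subset\R^m$, restricted to $\Omega$. For each $\alpha$ define the trial functions $\varphi_{\alpha i}=x_\alpha u_i-\sum_{j}a_{\alpha ij}u_j$, with the coefficients $a_{\alpha ij}$ chosen so that $\varphi_{\alpha i}\perp u_1,\dots,u_k$ in $L^2(\Omega)$; these are admissible for the Rayleigh quotient of $\lambda_{k+1}$. Summing the resulting inequalities over $\alpha$ and $i$, and using $\sum_\alpha|\nabla x_\alpha|^2=n$ together with $\sum_\alpha(\Delta x_\alpha)^2=n^2|\mathbf H|^2$ (the immersion identity $\Delta \mathbf x=n\mathbf H$), one obtains after the usual manipulations with the antisymmetric part of the coefficients a bound of the shape
\begin{eqnarray*}
n\sum_{i=1}^k(\lambda_{k+1}-\lambda_i)^2\leq 4\sum_{i=1}^k(\lambda_{k+1}-\lambda_i)\lambda_i+n^2\sum_{i=1}^k(\lambda_{k+1}-\lambda_i)^2\int_\Omega|\mathbf H|^2 u_i^2.
\end{eqnarray*}
The Sobolev inequality derived in the first paragraph, combined with Hölder ($\int_\Omega|\mathbf H|^2u_i^2\leq\||\mathbf H|\|_{L^n(\Omega)}^2\|u_i\|_{L^{2n/(n-2)}(\Omega)}^2\leq C_s\||\mathbf H|\|_{L^n(\Omega)}^2\int_\Omega|\nabla u_i|^2=C_s\||\mathbf H|\|_{L^n(\Omega)}^2\lambda_i$), converts the last term into $n^2C_s\||\mathbf H|\|_{L^n(\Omega)}^2\sum_i(\lambda_{k+1}-\lambda_i)^2\lambda_i$, and a further application of the Cauchy–Schwarz / Chebyshev trick on the pair of sums $\sum(\lambda_{k+1}-\lambda_i)^2\lambda_i$ and $\sum(\lambda_{k+1}-\lambda_i)\lambda_i$ (exactly as in \cite{Chen,Sou}) rearranges everything into the claimed form (\ref{third result}).

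The only genuinely new point — and hence the main obstacle — is the first paragraph: verifying that the Michael–Simon inequality, which natively has the mean-curvature term on its right-hand side and exponent $n/(n-1)$, can be upgraded, under the smallness assumption $\||\mathbf H|\|_{L^n(\Omega)}C_2<1$, to a clean $L^{2n/(n-2)}$–$L^2$ Sobolev inequality with the explicit constant $C_s$. This requires choosing the correct test function $|v|^{2(n-1)/(n-2)}$, differentiating it, and applying Hölder with the pair of conjugate exponents $(n,\,n/(n-1))$ so that $\int_\Omega|\mathbf H|\,|v|^{2(n-1)/(n-2)}\leq\||\mathbf H|\|_{L^n(\Omega)}\big(\int_\Omega|v|^{2n/(n-2)}\big)^{(n-1)/n}$, which is then moved to the left-hand side; the factor $\frac{2(n-1)}{n-2}$ in $C_s$ comes from the chain rule and the factor $\frac{1}{1-\||\mathbf H|\|_{L^n(\Omega)}C_2}$ from the absorption. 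After that, the rest is identical to the already-proved Theorem \ref{Theorem 1.1}, so no further difficulty arises. $\hfill\Box$\\
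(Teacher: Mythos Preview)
Your overall strategy is exactly the paper's: first upgrade the Michael--Simon inequality to a clean $L^{2n/(n-2)}$--$L^2$ Sobolev inequality on $\Omega$ with the explicit constant $C_s$ (this is Lemma~2.2 in the paper, and your derivation of it is correct), and then rerun the commutator argument of Theorem~\ref{Theorem 1.1} with $C_1$ replaced by $C_s$.

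There is, however, a slip in your displayed intermediate inequality. The Ashbaugh--Hermi commutator lemma (Lemma~\ref{Stability inequality}) reads
\[
\sum_{i=1}^{k}(\lambda_{k+1}-\lambda_i)^2\,\langle[-\Delta,F]u_i,Fu_i\rangle
\;\le\;
\sum_{i=1}^{k}(\lambda_{k+1}-\lambda_i)\,\bigl\|[-\Delta,F]u_i\bigr\|^2,
\]
so after summing over the coordinate functions the mean-curvature term $n^2\int_\Omega|\mathbf H|^2u_i^2$ sits on the \emph{right-hand side} together with $4\lambda_i$, and both carry the weight $(\lambda_{k+1}-\lambda_i)$, not $(\lambda_{k+1}-\lambda_i)^2$. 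The correct intermediate bound (this is (\ref{Ash's function}) in the paper) is
\[
n\sum_{i=1}^k(\lambda_{k+1}-\lambda_i)^2
\;\le\;
\sum_{i=1}^k(\lambda_{k+1}-\lambda_i)\Bigl(n^2\!\int_\Omega|\mathbf H|^2u_i^2+4\lambda_i\Bigr).
\]
Once you insert the H\"older/Sobolev estimate $\int_\Omega|\mathbf H|^2u_i^2\le C_s\||\mathbf H|\|_{L^n(\Omega)}^2\lambda_i$, inequality (\ref{third result}) drops out immediately; no ``Cauchy--Schwarz / Chebyshev trick'' on the pair of sums is needed. In fact the extra step you propose would not work as stated: there is no universal bound of $\sum_i(\lambda_{k+1}-\lambda_i)^2\lambda_i$ by $\sum_i(\lambda_{k+1}-\lambda_i)\lambda_i$, so that route does not close. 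With the exponent corrected, your proof coincides with the paper's.
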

	\begin{thm}\label{Theorem 1.5}
	  Under the same assumption as in the Theorem 1.6, and denote by $\Gamma_{i}$ the $ith$ eigenvalue of the clamped plate problem (\ref{clamped plate problem}).
	  Then
	  	\begin{align}\label{fourth result}
	  		\sum_{i=1}^{k}\left(\Gamma_{k+1}-\Gamma_{i}\right)^{2} \leq & \frac{1}{n}\left\{\sum_{i=1}^{k}\left(\Gamma_{k+1}-\Gamma_{i}\right)^{2}\left(n^{2}C_{s}\left\||\mathbf{H}|\right\|_{L^n(\Omega)}+2n+4) \Gamma_{i}^{1 / 2}\right)\right\}^{1 / 2} \nonumber\\
	  		& \times\left\{\sum_{i=1}^{k}\left(\Gamma_{k+1}-\Gamma_{i}\right)\left(n^{2}C_{s}\left\||\mathbf{H}|\right\|_{L^n(\Omega)}+4)
	  		\Gamma_{i}^{1 / 2}\right)\right\}^{1 / 2},
	  	\end{align}
	  	where  $C_{s}=\left(\frac{C_{2}}{1-\||\mathbf{H}|\|_{L^n(\Omega)} C_{2}} \frac{2(n-1)}{n-2}\right)^{2} $.
	\end{thm}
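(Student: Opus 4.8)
The plan is to deduce Theorem~\ref{Theorem 1.5} from Theorem~\ref{Theorem 1.2}. The key observation is that the hypothesis $\||\mathbf{H}|\|_{L^n(\Omega)}C_2<1$, together with the Michael--Simon Sobolev inequality, forces a Sobolev inequality of exactly the type postulated in Theorems~\ref{Theorem 1.1}--\ref{Theorem 1.2}, but now with the explicit constant $C_s$, valid for all Lipschitz functions supported in $\overline{\Omega}$; once this is in place, one simply re-runs the clamped-plate argument behind Theorem~\ref{Theorem 1.2} with $C_1$ replaced by $C_s$. Recall that the Michael--Simon inequality supplies a dimensional constant $C_2=C_2(n)$ with
\begin{eqnarray}\label{MS}
\left(\int_M|f|^{\frac{n}{n-1}}\,dvol_g\right)^{\frac{n-1}{n}}\leq C_2\int_M\left(|\nabla f|+|\mathbf{H}|\,|f|\right)dvol_g
\end{eqnarray}
for every compactly supported Lipschitz function $f$ on the immersed submanifold $M\subset\mathbb{R}^m$.

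First I would derive the $L^{2n/(n-2)}$-Sobolev inequality. Apply \eqref{MS} to $f=|u|^{\beta}$ with $\beta=\tfrac{2(n-1)}{n-2}$, for which $\tfrac{\beta n}{n-1}=\tfrac{2n}{n-2}$ and $\beta-1=\tfrac{n}{n-2}$. Since $|\nabla f|=\beta|u|^{n/(n-2)}|\nabla u|$ almost everywhere, Cauchy--Schwarz bounds the gradient term by $\beta\big(\int_M|u|^{2n/(n-2)}\big)^{1/2}\big(\int_M|\nabla u|^2\big)^{1/2}$, while Hölder's inequality with exponents $n$ and $\tfrac{n}{n-1}$ bounds the mean-curvature term by $\||\mathbf{H}|\|_{L^n(\operatorname{supp}u)}\big(\int_M|u|^{2n/(n-2)}\big)^{(n-1)/n}$. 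Putting $X=\big(\int_M|u|^{2n/(n-2)}\big)^{(n-2)/(2n)}$, inequality \eqref{MS} becomes $X^{\beta}\leq C_2\beta\,X^{n/(n-2)}\big(\int_M|\nabla u|^2\big)^{1/2}+C_2\||\mathbf{H}|\|_{L^n(\operatorname{supp}u)}X^{\beta}$; dividing by $X^{n/(n-2)}$ and using that the exponent $\beta-\tfrac{n}{n-2}$ equals $1$, one may absorb the mean-curvature term into the left side precisely because $C_2\||\mathbf{H}|\|_{L^n(\Omega)}<1$ (when $\operatorname{supp}u\subset\overline{\Omega}$ only $\||\mathbf{H}|\|_{L^n(\Omega)}$ occurs, and since $t\mapsto C_2\beta/(1-C_2t)$ is increasing one may then use the uniform value $\||\mathbf{H}|\|_{L^n(\Omega)}$). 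Squaring the resulting estimate for $X$ gives
\begin{eqnarray}\label{derivedSob}
\left(\int_M|u|^{\frac{2n}{n-2}}\right)^{\frac{n-2}{n}}\leq C_s\int_M|\nabla u|^2,\qquad C_s=\left(\frac{C_2}{1-\||\mathbf{H}|\|_{L^n(\Omega)}C_2}\cdot\frac{2(n-1)}{n-2}\right)^{2}
\end{eqnarray}
for every Lipschitz $u$ with $\operatorname{supp}u\subset\overline{\Omega}$; the mild non-smoothness of $|u|^{\beta}$ on $\{u=0\}$ is dealt with as usual by replacing $|u|$ with $(u^2+\varepsilon)^{1/2}-\varepsilon^{1/2}$ and letting $\varepsilon\to0$.

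With \eqref{derivedSob} available, the remainder of the proof is that of Theorem~\ref{Theorem 1.2}. Extend the clamped-plate eigenfunctions $u_i$ by zero across $\partial\Omega$ (admissible since $u_i$ and $\partial u_i/\partial\nu$ both vanish there), take as trial functions $\varphi_{i\alpha}=x_\alpha u_i-\sum_{j=1}^{k}\big(\int_\Omega x_\alpha u_iu_j\big)u_j$, where $x_1,\dots,x_m$ are the coordinate functions of the immersion, and combine the Rayleigh--Ritz characterization of $\Gamma_{k+1}$ with the Wang--Xia commutator identities and the classical relations $\sum_\alpha|\nabla x_\alpha|^2=n$, $\sum_\alpha(\Delta x_\alpha)^2=n^2|\mathbf{H}|^2$. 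The extrinsic geometry then enters the estimate only through $\sum_\alpha\int_\Omega(\Delta x_\alpha)^2u_i^2=n^2\int_\Omega|\mathbf{H}|^2u_i^2$, and this is the sole point at which the Sobolev inequality is used: Hölder's inequality, \eqref{derivedSob} applied to the zero-extension of $u_i$, and the elementary bound $\int_\Omega|\nabla u_i|^2=-\int_\Omega u_i\Delta u_i\leq\big(\int_\Omega(\Delta u_i)^2\big)^{1/2}=\Gamma_i^{1/2}$ give
\begin{eqnarray*}
\int_\Omega|\mathbf{H}|^2u_i^2\leq\||\mathbf{H}|\|_{L^n(\Omega)}^2\left(\int_\Omega|u_i|^{\frac{2n}{n-2}}\right)^{\frac{n-2}{n}}\leq C_s\||\mathbf{H}|\|_{L^n(\Omega)}^2\,\Gamma_i^{1/2},
\end{eqnarray*}
i.e. the quantity $n^2H_0^2$ appearing in the Cheng--Ichikawa--Mametsuka/Wang--Xia scheme is now replaced by $n^2C_s\||\mathbf{H}|\|_{L^n(\Omega)}^2\Gamma_i^{1/2}$. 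Feeding this back into the purely algebraic part of the proof of Theorem~\ref{Theorem 1.2} yields \eqref{fourth result}.

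I do not expect a deep obstacle, since the hard analytic work is already contained in Theorem~\ref{Theorem 1.2}. The two points that need care are: the absorption step in the derivation of \eqref{derivedSob}, which genuinely requires $C_2\||\mathbf{H}|\|_{L^n(\Omega)}<1$ and collapses without it; and the verification that, throughout the proof of \eqref{second result}, the mean curvature is felt only through integrals $\int_\Omega|\mathbf{H}|^2u_i^2$ over (zero-extensions of) eigenfunctions supported in $\overline{\Omega}$ --- never through a pointwise bound, nor through a pairing with $|\nabla u_i|$, that an $L^n$ hypothesis could not control --- so that the single Sobolev estimate displayed above is exactly what is needed. This is the same structural feature that makes Theorem~\ref{Theorem 1.2} itself work.
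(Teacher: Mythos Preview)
Your proposal is correct and follows essentially the same route as the paper: the paper isolates the derived Sobolev inequality with constant $C_s$ as Lemma~2.2 (proved exactly by your absorption argument from the Michael--Simon inequality), then reuses the intermediate inequality from the proof of Theorem~\ref{Theorem 1.2} and bounds $\int_\Omega|\mathbf{H}|^2u_i^2\leq C_s\||\mathbf{H}|\|_{L^n(\Omega)}^2\Gamma_i^{1/2}$ via H\"older and Lemma~2.2, before optimizing over $\delta$. Your observation that the Sobolev inequality is only needed for functions supported in $\overline{\Omega}$ (rather than all of $\mathcal{D}(M)$) is correct and is precisely why the $\Omega$-localized hypothesis $\||\mathbf{H}|\|_{L^n(\Omega)}C_2<1$ suffices.
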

	From Theorems 1.6, 1.7, we obtain the following universal inequalities.
\begin{cor}
Let $(M,g)$ be an  $n$-dimensional complete Riemannian manifold and  $\Omega $  a bounded domain with smooth boundary $\partial \Omega$ in $M$. Assume that $M$ is isometrically immersed in $R^m$ with mean curvature vector $\mathbf{H}$. Suppose that $ \||\mathbf{H}|\|_{L^n(M)} C_{2}<1$ , where $ C_{2} $ is the constant in Michael-Simon's Sobolev inequality. Denote by $\lambda_{i}$ the $ith$ eigenvalue of the problem (\ref{Dirichlet problem}).
	Then
		\begin{eqnarray*}	
		n \sum_{i=1}^{k}\left(\lambda_{k+1}-\lambda_{i}\right)^{2} \leq \left(4+n^{2}C_s\left\||\mathbf{H}|\right\|_{L^n(M)}^2\right) \sum_{i=1}^{k}\left(\lambda_{k+1}-\lambda_{i}\right)\lambda_{i},
	\end{eqnarray*}
	where  $C_{s}=\left(\frac{C_{2}}{1-\||\mathbf{H}|\|_{L^n(M)} C_{2}} \frac{2(n-1)}{n-2}\right)^{2} $.
\end{cor}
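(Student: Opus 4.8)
The plan is to deduce this from Theorem~\ref{Theoerm 1.4} by a monotonicity argument in $\||\mathbf{H}|\|_{L^n}$. First I would observe that since $\Omega\subset\subset M$ we have $\||\mathbf{H}|\|_{L^n(\Omega)}\le\||\mathbf{H}|\|_{L^n(M)}$; combined with the hypothesis $C_2\||\mathbf{H}|\|_{L^n(M)}<1$ this gives $C_2\||\mathbf{H}|\|_{L^n(\Omega)}<1$, so Theorem~\ref{Theoerm 1.4} applies on $\Omega$ and yields
\begin{eqnarray*}
 n\sum_{i=1}^{k}(\lambda_{k+1}-\lambda_i)^2\le\Big(4+n^2 C_s\,\||\mathbf{H}|\|_{L^n(\Omega)}^2\Big)\sum_{i=1}^{k}(\lambda_{k+1}-\lambda_i)\lambda_i,
\end{eqnarray*}
with $C_s=\big(\frac{C_2}{1-\||\mathbf{H}|\|_{L^n(\Omega)}C_2}\cdot\frac{2(n-1)}{n-2}\big)^2$.

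The remaining step is to check that the coefficient on the right-hand side only increases (hence remains a valid upper bound) when $\||\mathbf{H}|\|_{L^n(\Omega)}$ is replaced by the possibly larger quantity $\||\mathbf{H}|\|_{L^n(M)}$. Setting $t=\||\mathbf{H}|\|_{L^n(\Omega)}$, one has $C_s t^2=\big(\frac{2(n-1)}{n-2}\big)^2\big(\frac{C_2 t}{1-C_2 t}\big)^2$, and the map $s\mapsto \frac{C_2 s}{1-C_2 s}$ is strictly increasing on $[0,1/C_2)$. Therefore $0\le \||\mathbf{H}|\|_{L^n(\Omega)}\le\||\mathbf{H}|\|_{L^n(M)}<1/C_2$ forces $C_s\,\||\mathbf{H}|\|_{L^n(\Omega)}^2\le C_s'\,\||\mathbf{H}|\|_{L^n(M)}^2$, where $C_s'=\big(\frac{C_2}{1-\||\mathbf{H}|\|_{L^n(M)}C_2}\cdot\frac{2(n-1)}{n-2}\big)^2$. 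Plugging this into the displayed inequality, and noting that $\sum_{i=1}^{k}(\lambda_{k+1}-\lambda_i)\lambda_i\ge0$ (as $0<\lambda_i\le\lambda_{k+1}$ for $i\le k$) so that the direction of the inequality is preserved, gives exactly the asserted estimate.

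No serious obstacle is anticipated: all the analytic content sits in Theorem~\ref{Theoerm 1.4}, and the corollary follows purely from $\Omega\subset M$. The only point requiring a moment's care is that $C_s$ itself depends on the domain through $\||\mathbf{H}|\|_{L^n(\Omega)}$, so one must invoke the monotonicity of $s\mapsto\frac{C_2 s}{1-C_2 s}$ rather than naively substituting $\||\mathbf{H}|\|_{L^n(M)}$ for $\||\mathbf{H}|\|_{L^n(\Omega)}$ in each separate occurrence.
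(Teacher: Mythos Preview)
Your proposal is correct and follows exactly the approach the paper intends: the paper simply says ``From Theorems~1.6, 1.7, we obtain the following universal inequalities,'' relying implicitly on $\||\mathbf{H}|\|_{L^n(\Omega)}\le\||\mathbf{H}|\|_{L^n(M)}$. Your write-up is in fact more careful than the paper's one-line justification, since you explicitly verify the monotonicity of $s\mapsto C_s(s)\,s^2$ needed to pass from the $\Omega$-norm to the $M$-norm (a point the paper glosses over).
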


	\begin{cor}
	  Under the same assumption as in the Theorem 1.6, and denote by $\Gamma_{i}$ the $ith$ eigenvalue of the clamped plate problem (\ref{clamped plate problem}).
	  Then
	  	\begin{align*}
	  		\sum_{i=1}^{k}\left(\Gamma_{k+1}-\Gamma_{i}\right)^{2} \leq & \frac{1}{n}\left\{\sum_{i=1}^{k}\left(\Gamma_{k+1}-\Gamma_{i}\right)^{2}\left(n^{2}C_{s}\left\||\mathbf{H}|\right\|^2_{L^n(M)}+2n+4) \Gamma_{i}^{1 / 2}\right)\right\}^{1 / 2} \nonumber\\
	  		& \times\left\{\sum_{i=1}^{k}\left(\Gamma_{k+1}-\Gamma_{i}\right)\left(n^{2}C_{s}\left\||\mathbf{H}|\right\|^2_{L^n(M)}+4)
	  		\Gamma_{i}^{1 / 2}\right)\right\}^{1 / 2},
	  	\end{align*}
	  	where  $C_{s}=\left(\frac{C_{2}}{1-\||\mathbf{H}|\|_{L^n(M)} C_{2}} \frac{2(n-1)}{n-2}\right)^{2} $.
	\end{cor}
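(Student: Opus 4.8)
The plan is to deduce this corollary directly from Theorem~\ref{Theorem 1.5} by a monotonicity argument, in the same spirit as the deduction of the corollaries to Theorems~\ref{Theorem 1.1} and~\ref{Theorem 1.2} from those theorems. First I would record that, since $\Omega\subset M$,
\begin{equation*}
\||\mathbf{H}|\|_{L^n(\Omega)}=\Big(\int_\Omega|\mathbf{H}|^n\,dvol_g\Big)^{1/n}\le\Big(\int_M|\mathbf{H}|^n\,dvol_g\Big)^{1/n}=\||\mathbf{H}|\|_{L^n(M)}.
\end{equation*}
In particular the standing hypothesis $\||\mathbf{H}|\|_{L^n(M)}\,C_2<1$ forces $\||\mathbf{H}|\|_{L^n(\Omega)}\,C_2<1$, so the hypotheses of Theorem~\ref{Theorem 1.5} are met and its conclusion holds with the constant $C_s=C_s(\Omega):=\big(\frac{C_2}{1-\||\mathbf{H}|\|_{L^n(\Omega)}C_2}\cdot\frac{2(n-1)}{n-2}\big)^2$.

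Next I would observe that the map $t\mapsto\frac{C_2}{1-tC_2}$ is increasing on $[0,C_2^{-1})$, so that $\||\mathbf{H}|\|_{L^n(\Omega)}\le\||\mathbf{H}|\|_{L^n(M)}$ yields $C_s(\Omega)\le C_s(M)$ and hence
\begin{equation*}
n^2C_s(\Omega)\,\||\mathbf{H}|\|^2_{L^n(\Omega)}\ \le\ n^2C_s(M)\,\||\mathbf{H}|\|^2_{L^n(M)}.
\end{equation*}
Since $\Gamma_{k+1}-\Gamma_i\ge0$ and $\Gamma_i^{1/2}\ge0$ for every $1\le i\le k$, each summand appearing on the right-hand side of the inequality in Theorem~\ref{Theorem 1.5} is a nondecreasing function of the quantity $n^2C_s\,\||\mathbf{H}|\|^2$; therefore replacing the $L^n(\Omega)$ norms by the larger $L^n(M)$ norms only enlarges each of the two bracketed sums, hence enlarges their product, while it leaves $\sum_{i=1}^k(\Gamma_{k+1}-\Gamma_i)^2$ unchanged. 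This is precisely the asserted inequality.

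I do not expect any genuine obstacle here: the argument is a purely monotone reduction to Theorem~\ref{Theorem 1.5}. The only two points that deserve a line of care are the monotonicity of the factor $\frac{C_2}{1-tC_2}$ in the mean-curvature norm (so that the passage from $C_s(\Omega)$ to $C_s(M)$ goes in the favorable direction) and the nonnegativity of the eigenvalue gaps $\Gamma_{k+1}-\Gamma_i$, which is what makes the right-hand side of Theorem~\ref{Theorem 1.5} monotone in its coefficients. The companion corollary for the Dirichlet problem follows from Theorem~\ref{Theoerm 1.4} in exactly the same way.
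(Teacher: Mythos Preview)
Your proposal is correct and follows exactly the route the paper takes: the paper derives this corollary (and its companion) directly from Theorem~\ref{Theorem 1.5} by the one-line observation that $\||\mathbf{H}|\|_{L^n(\Omega)}\le\||\mathbf{H}|\|_{L^n(M)}$, with no further argument given. Your write-up is in fact more careful than the paper's, since you make explicit the monotonicity of $t\mapsto C_2/(1-tC_2)$ needed to pass from $C_s(\Omega)$ to $C_s(M)$, a point the paper leaves implicit.
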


\section{Michael-Simon's type  Sobolev inequality}
	The Michael-Simon's Sobolev inequality is well known as the following lemma:
    \begin{lem}[\cite{JH}]
    Let $M$  be an $n$-dimensional submanifold immersed in  $\mathbb{R}^{n+p}$ with mean carvature vector $\mathbf{H}$. Then for any function $ h \in   C_{0}^{1}(M)$ , we have
    \begin{eqnarray*}
    	\left(\int_{M}|h|^{\frac{n}{n-1}} \right)^{\frac{n-1}{n}} \leq C_{2}\left(\int_{M}|\nabla h|+\int_{M}|\mathbf{H}||h| \right),
    \end{eqnarray*}
    where  $C_{2}$  is a constant depends only on dimension $ n$ .
    \end{lem}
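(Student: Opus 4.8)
The plan is to follow the classical argument of Michael and Simon (\cite{JH}), which proceeds in two stages: first one establishes an isoperimetric inequality for the submanifold $M$ out of the first variation formula and a monotonicity estimate, and then one upgrades it to the stated Sobolev inequality by the coarea formula. By replacing $h$ with $|h|$ and mollifying, it suffices to treat $h \in C_0^1(M)$ with $h \ge 0$.

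\textit{Stage one: an isoperimetric inequality.} The basic tool is the divergence theorem on $M$: for any compactly supported $C^1$ vector field $X$ on $\mathbb{R}^{n+p}$,
\[
\int_M \mathrm{div}_M X \, d\mu = -\int_M \langle \mathbf{H}, X\rangle\, d\mu, \qquad \mathrm{div}_M X := \sum_{i=1}^n \langle D_{e_i}X, e_i\rangle,
\]
where $\{e_i\}$ is a local orthonormal frame of $TM$. Fix $x_0 \in M$, write $r(x) = |x - x_0|$, and substitute $X(x) = \phi(x)\,(x - x_0)$; since $\mathrm{div}_M(x-x_0) = n$, the identity becomes a relation between $\int_M n\phi\, d\mu$, a term involving $\nabla_M \phi$, and a term involving $\mathbf{H}$. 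Taking $\phi = h\,\gamma_\sigma(r)$ with $\gamma_\sigma$ a smooth approximation of the indicator of $[0,\sigma]$ and passing to the limit, one extracts a differential inequality for $g(\sigma) := \int_{M \cap B_\sigma(x_0)} h\, d\mu$ of the schematic shape
\[
-\frac{d}{d\sigma}\!\left(\frac{g(\sigma)}{\sigma^{\,n}}\right) \;\le\; \frac{1}{\sigma^{\,n}}\int_{M\cap B_\sigma(x_0)}\bigl(|\nabla_M h| + h\,|\mathbf{H}|\bigr)\, d\mu .
\]
Since $M$ is smooth, $\sigma^{-n} g(\sigma) \to \omega_n\, h(x_0)$ as $\sigma \to 0^+$ whenever $h$ is continuous at $x_0$, so integrating this differential inequality in $\sigma$ yields a lower bound for the density of the weighted measure $h\, d\mu$ at every point of $\{h > 0\}$. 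Applying this with $h$ an approximation of the indicator of a bounded open set $U \subset M$ with $C^1$ boundary, and letting the density lower bound propagate outward from interior points until it is balanced by the "perimeter budget" on the right, one arrives at
\[
\mu(U)^{\frac{n-1}{n}} \;\le\; C_2\!\left(\mathcal{H}^{n-1}(\partial U) + \int_U |\mathbf{H}|\, d\mu\right),
\]
with $C_2 = C_2(n)$.

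\textit{Stage two: coarea.} I would apply this isoperimetric inequality to the superlevel sets $U_t := \{h > t\}$, which by Sard's theorem have $C^1$ boundary $\{h = t\}$ for a.e.\ $t > 0$. The coarea formula gives $\int_0^\infty \mathcal{H}^{n-1}(\{h=t\})\, dt = \int_M |\nabla_M h|\, d\mu$, and Fubini's theorem gives $\int_0^\infty\!\bigl(\int_{U_t}|\mathbf{H}|\, d\mu\bigr) dt = \int_M h\,|\mathbf{H}|\, d\mu$. On the other hand, the layer-cake representation together with Minkowski's integral inequality (valid since $\tfrac{n}{n-1}\ge 1$) yields $\bigl(\int_M h^{n/(n-1)}\, d\mu\bigr)^{(n-1)/n} \le \int_0^\infty \mu(U_t)^{(n-1)/n}\, dt$. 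Chaining these three facts with the isoperimetric inequality of Stage one gives exactly
\[
\left(\int_M |h|^{\frac{n}{n-1}}\right)^{\frac{n-1}{n}} \le C_2\left(\int_M |\nabla h| + \int_M |\mathbf{H}|\,|h|\right).
\]

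The main obstacle is Stage one: deriving the monotonicity-type differential inequality from the first variation formula requires carefully tracking the normal component of the position vector $x - x_0$ and controlling the passage $\gamma_\sigma \to \mathbf{1}_{[0,\sigma]}$, and then extracting an isoperimetric inequality with a purely dimensional constant demands a careful propagation-of-density argument (e.g.\ in the style of Allard). Stage two is routine once Stage one is available, as is the reduction to non-negative smooth $h$ by mollification.
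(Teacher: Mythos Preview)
Your sketch is a faithful outline of the original Michael--Simon argument and is correct in broad strokes, but there is nothing in the paper to compare it to: the paper does not prove this lemma. Lemma~2.1 is simply quoted from \cite{JH} as a known result; the only proof given in Section~2 is that of Lemma~2.2, which \emph{uses} Lemma~2.1 together with H\"older's inequality and the substitution $h = f^{2(n-1)/(n-2)}$ to upgrade the $L^{n/(n-1)}$ inequality to an $L^{2n/(n-2)}$ one.

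So your proposal is not wrong, but it is solving a different problem than the paper does: you are reconstructing the Michael--Simon proof itself, whereas the paper treats that inequality as a black box. If your goal is to match the paper, you need only cite \cite{JH}. If your goal is an independent proof, your two-stage plan (first variation $\Rightarrow$ monotonicity $\Rightarrow$ isoperimetric inequality, then coarea) is the standard route; the one place to be careful is that the passage from the density lower bound to the isoperimetric inequality with a purely dimensional constant is not a one-line consequence of the differential inequality you wrote, but requires the full covering/iteration argument of Michael--Simon (or Allard), which you correctly flag as the main obstacle.
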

    In the latter sections we will need the following corollary proved by Li, Xu and Zhou for $\Omega=M$:
    \begin{lem}[\cite{YW}]
    	Let  $M$  be an $n$-dimensional submanifold immersed in $ \mathbb{R}^{n+p}$ with mean carvature vector $\mathbf{H}$.  Suppose that $\Omega$ is a smooth open domain in $M$ and $ \|\mathbf{H}\|_{L^n(\Omega)} C_{2}<1$ , where $ C_{2} $ is the constant in Lemma $2.1$. Then for any  $f \in C_{0}^{1}(\Omega)$ , we have
    	\begin{eqnarray*}
    			\left(\int_{\Omega}|f|^{\frac{2 n}{n-2}} \right)^{\frac{n-2}{n}} \leq C_{s} \int_{\Omega}|\nabla f|^{2},
    	\end{eqnarray*}
    	where  $C_{s}=\left(\frac{C_{2}}{1-\|\mathbf{H}\|_{L^n(\Omega)}C_{2}} \frac{2(n-1)}{n-2}\right)^{2}$.
    \end{lem}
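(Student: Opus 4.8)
The plan is to derive this $L^{2n/(n-2)}$--$L^{2}$ Sobolev inequality from the $L^{n/(n-1)}$--$L^{1}$ Michael--Simon inequality of Lemma 2.1 by the classical power substitution $h=|f|^{\gamma}$, followed by H\"older's inequality and an absorption argument. The exponent $\gamma$ is forced: we want $h^{n/(n-1)}$ to equal the target power $|f|^{2n/(n-2)}$, i.e.\ $\gamma\cdot\frac{n}{n-1}=\frac{2n}{n-2}$, which gives $\gamma=\frac{2(n-1)}{n-2}$. Since $n\ge 3$ forces $2<\gamma\le 4$, the function $t\mapsto|t|^{\gamma}$ is $C^{1}$ on $\mathbb{R}$, so $h=|f|^{\gamma}$, extended by zero outside $\Omega$, lies in $C_{0}^{1}(M)$ and Lemma 2.1 may be applied to it. Writing $I:=\int_{\Omega}|f|^{2n/(n-2)}$, compactness of the support of $f$ makes $I$ finite, and the asserted inequality is trivial if $I=0$; so we may assume $0<I<\infty$.

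Applying Lemma 2.1 to $h$, the left-hand side equals $\bigl(\int_{M}|h|^{n/(n-1)}\bigr)^{(n-1)/n}=I^{(n-1)/n}$. For the gradient term on the right, $|\nabla h|=\gamma|f|^{\gamma-1}|\nabla f|$, so by Cauchy--Schwarz together with the identity $2(\gamma-1)=\frac{2n}{n-2}$,
\[
\int_{M}|\nabla h|=\gamma\int_{\Omega}|f|^{\gamma-1}|\nabla f|\le\gamma\, I^{1/2}\Bigl(\int_{\Omega}|\nabla f|^{2}\Bigr)^{1/2}.
\]
For the curvature term, H\"older's inequality with exponents $n$ and $\frac{n}{n-1}$, applied on $\Omega$ (which is why only $\|\mathbf{H}\|_{L^{n}(\Omega)}$ enters), gives
\[
\int_{M}|\mathbf{H}|\,|h|=\int_{\Omega}|\mathbf{H}|\,|f|^{\gamma}\le\|\mathbf{H}\|_{L^{n}(\Omega)}\Bigl(\int_{\Omega}|f|^{\gamma n/(n-1)}\Bigr)^{(n-1)/n}=\|\mathbf{H}\|_{L^{n}(\Omega)}\,I^{(n-1)/n}.
\]
Combining the three estimates yields
\[
I^{(n-1)/n}\le C_{2}\gamma\, I^{1/2}\Bigl(\int_{\Omega}|\nabla f|^{2}\Bigr)^{1/2}+C_{2}\|\mathbf{H}\|_{L^{n}(\Omega)}\,I^{(n-1)/n}.
\]

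The last step is absorption: since $\|\mathbf{H}\|_{L^{n}(\Omega)}C_{2}<1$ by hypothesis, the factor $1-C_{2}\|\mathbf{H}\|_{L^{n}(\Omega)}$ is positive; moving the last term to the left, dividing by it and then by $I^{1/2}>0$, and using $\frac{n-1}{n}-\frac12=\frac{n-2}{2n}$, we obtain
\[
I^{(n-2)/(2n)}\le\frac{C_{2}\gamma}{1-C_{2}\|\mathbf{H}\|_{L^{n}(\Omega)}}\Bigl(\int_{\Omega}|\nabla f|^{2}\Bigr)^{1/2}.
\]
Squaring both sides gives $I^{(n-2)/n}\le C_{s}\int_{\Omega}|\nabla f|^{2}$ with $C_{s}=\Bigl(\frac{C_{2}\gamma}{1-C_{2}\|\mathbf{H}\|_{L^{n}(\Omega)}}\Bigr)^{2}=\Bigl(\frac{C_{2}}{1-\|\mathbf{H}\|_{L^{n}(\Omega)}C_{2}}\cdot\frac{2(n-1)}{n-2}\Bigr)^{2}$, which is exactly the claimed inequality. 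The computation is elementary; the only delicate ingredient is the absorption step, where the smallness hypothesis $\|\mathbf{H}\|_{L^{n}(\Omega)}C_{2}<1$ is precisely what keeps the denominator positive, while everything else is exponent bookkeeping that collapses to the single choice $\gamma=\frac{2(n-1)}{n-2}$.
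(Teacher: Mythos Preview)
Your proof is correct and follows essentially the same route as the paper: apply the Michael--Simon inequality to $h=|f|^{2(n-1)/(n-2)}$, bound the mean-curvature term by H\"older so it can be absorbed using the hypothesis $\|\mathbf{H}\|_{L^n(\Omega)}C_2<1$, and estimate the gradient term by Cauchy--Schwarz. The only cosmetic difference is that the paper performs the absorption first (for general $h$) and then substitutes, whereas you substitute first and then absorb; your added remark that $\gamma>2$ makes $|f|^{\gamma}\in C^1$ is a detail the paper leaves implicit.
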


 \begin{proof}   For the reader's convenience, we give a proof of this lemma here. For a function  $h\in C^1_0(\Omega)$  as in Lemma $2.1$, by H\"older's inequality, we have
    	\begin{eqnarray*}
    		\int_{M}|\mathbf{H}||h|\leq\left(\int_{\Omega}|h|^{\frac{n}{n-1}}\right)^{\frac{n-1}{n}}\|\mathbf{H}\|_{L^n(\Omega)}.
    	\end{eqnarray*}
    Therefore by the Michael-Simon's Sobolev inequality in Lemma $2.1$ and the inequality above we get
    	\begin{eqnarray*}
    		\left(\int_{\Omega}|h|^{\frac{n}{n-1}} \right)^{\frac{n-1}{n}} \leq \frac{C_{2}}{1-\|\mathbf{H}\|_{L^n(\Omega)} C_{2}} \int_{\Omega}|\nabla h| .
    	\end{eqnarray*}
    	Now for any  $f \in C_{0}^{1}(\Omega)$ , let $ h=f^{\frac{2(n-1)}{n-2}}$ , we get
    	\begin{align}
    		\left(\int_{\Omega} f^{\frac{2 n}{n-2}}\right)^{\frac{n-1}{n}} & \leq \frac{C_{2}}{1-\|\mathbf{H}\|_{L^n(\Omega)} C_{2}} \frac{2(n-1)}{n-2} \int_{\Omega}|f|^{\frac{n}{n-2}}|\nabla f| \nonumber\\
    		& \leq \frac{C_{2}}{1-\|\mathbf{H}\| C_{2}} \frac{2(n-1)}{n-2}\left(\int_{M} f^{\frac{2 n}{n-2}} \right)^{\frac{1}{2}}\left(\int_{M}|\nabla f|^{2}\right)^{\frac{1}{2}}\nonumber. \end{align}
    	Then the conclusion follows easily.  Substituting (\ref{Simon's inequality 2}) into (\ref{Ash's function 1}), we can prove (\ref{third result}). \end{proof}
	\section{Proof of Theorem \ref{Theorem 1.1}}
		\begin{proof}
		Assume that $u_i,i=1,...k,$ are eigenfunctions of the Dirichlet problem with eigenvalues $\lambda_i$, such that $$\int_\Omega u_iu_j=\delta_{i j}.$$ The first part of the proof can be found in \cite{Sou}. To prove Theorem 1.1 we will need the following lemma.
	\begin{lem}\label{Stability inequality}
	For any smooth function $F$ on $M$ and any positive integer $k$ one has
	\begin{eqnarray*}
	\sum_{i=1}^{k}\left(\lambda_{k+1}-\lambda_{i}\right)^{2}\left\langle[-\Delta, F] u_{i}, F u_{i}\right\rangle_{L^{2}(\Omega)} \leq \sum_{i=1}^{k}\left(\lambda_{k+1}-\lambda_{i}\right)\left\|[-\Delta, F]
 u_{i}\right\|_{L^{2}(\Omega)}^{2},
	\end{eqnarray*}
where$[\cdot, \cdot]$ denotes the commutator of two operators $B$ and $C$ via $[B,C]=BC-CB $.
	\end{lem}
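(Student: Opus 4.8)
\medskip
\noindent\textbf{Proof plan.} The plan is to run the classical commutator argument (the engine behind the estimate in \cite{Sou}). Abbreviate $T_i:=\lambda_{k+1}-\lambda_i\ge 0$ for $1\le i\le k$, write $\psi_i:=[-\Delta,F]u_i$ and $a_{ij}:=\langle Fu_i,u_j\rangle_{L^2(\Omega)}$; since $F$ acts by multiplication one has $a_{ij}=a_{ji}$. Introduce the trial functions $\phi_i:=Fu_i-\sum_{j=1}^k a_{ij}u_j$, which vanish on $\partial\Omega$ and satisfy $\langle\phi_i,u_j\rangle_{L^2(\Omega)}=0$ for $1\le j\le k$, hence are admissible in the Rayleigh--Ritz characterization of $\lambda_{k+1}$.

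First I would record the exact identities produced by integration by parts together with completeness of the Dirichlet eigenbasis $\{u_j\}_{j\ge1}$ in $L^2(\Omega)$. Because $-\Delta u_j=\lambda_j u_j$ and all boundary terms vanish, $\langle\psi_i,u_j\rangle_{L^2(\Omega)}=(\lambda_j-\lambda_i)a_{ij}$, whence $Fu_i=\sum_{j\ge1}a_{ij}u_j$, $\psi_i=\sum_{j\ge1}(\lambda_j-\lambda_i)a_{ij}u_j$ and $\phi_i=\sum_{j>k}a_{ij}u_j$. In particular
\[
\langle\psi_i,\phi_i\rangle_{L^2(\Omega)}=\sum_{j>k}(\lambda_j-\lambda_i)a_{ij}^2\ge 0,\qquad \|\psi_i\|_{L^2(\Omega)}^2=\sum_{j\ge1}(\lambda_j-\lambda_i)^2a_{ij}^2,
\]
so that, setting $S_i:=\sum_{j=1}^k(\lambda_j-\lambda_i)^2a_{ij}^2\ge0$, one has $\sum_{j>k}(\lambda_j-\lambda_i)^2a_{ij}^2=\|\psi_i\|_{L^2(\Omega)}^2-S_i$. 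Expanding $Fu_i=\phi_i+\sum_{j\le k}a_{ij}u_j$ also gives
\[
\langle\psi_i,\phi_i\rangle_{L^2(\Omega)}=\langle\psi_i,Fu_i\rangle_{L^2(\Omega)}+\sum_{j=1}^k(T_j-T_i)a_{ij}^2 .
\]

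Next I would combine two elementary estimates, one per index $i$. Rayleigh--Ritz applied to $\phi_i$ gives $T_i\|\phi_i\|_{L^2(\Omega)}^2\le\langle\psi_i,\phi_i\rangle_{L^2(\Omega)}$ (using the expansions above to identify $\langle-\Delta\phi_i,\phi_i\rangle_{L^2(\Omega)}-\lambda_i\|\phi_i\|_{L^2(\Omega)}^2$ with $\langle\psi_i,\phi_i\rangle_{L^2(\Omega)}$), and Cauchy--Schwarz applied to $\langle\psi_i,\phi_i\rangle_{L^2(\Omega)}=\sum_{j>k}(\lambda_j-\lambda_i)a_{ij}^2$ together with $\|\phi_i\|_{L^2(\Omega)}^2=\sum_{j>k}a_{ij}^2$ gives
\[
\langle\psi_i,\phi_i\rangle_{L^2(\Omega)}^2\le\Bigl(\|\psi_i\|_{L^2(\Omega)}^2-S_i\Bigr)\|\phi_i\|_{L^2(\Omega)}^2 .
\]
Eliminating $\|\phi_i\|_{L^2(\Omega)}^2$ between these two (the cases $T_i=0$ and $\langle\psi_i,\phi_i\rangle_{L^2(\Omega)}=0$ being immediate) yields $T_i\langle\psi_i,\phi_i\rangle_{L^2(\Omega)}\le\|\psi_i\|_{L^2(\Omega)}^2-S_i$. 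Substituting the expansion of $\langle\psi_i,\phi_i\rangle_{L^2(\Omega)}$, multiplying by $T_i$, and summing over $i=1,\dots,k$ gives
\[
\sum_{i=1}^k T_i^2\langle\psi_i,Fu_i\rangle_{L^2(\Omega)}\le\sum_{i=1}^k T_i\|\psi_i\|_{L^2(\Omega)}^2-\sum_{i,j=1}^k T_i(T_j-T_i)^2a_{ij}^2-\sum_{i,j=1}^k T_i^2(T_j-T_i)a_{ij}^2 .
\]

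The argument has no deep obstacle --- it is the standard commutator trick --- but the one genuinely non-routine step is to check that the last two double sums cancel. Using $a_{ij}^2=a_{ji}^2$ and symmetrizing in $(i,j)$, the first becomes $\tfrac12\sum_{i,j}(T_i+T_j)(T_j-T_i)^2a_{ij}^2$ and the second becomes $-\tfrac12\sum_{i,j}(T_i+T_j)(T_j-T_i)^2a_{ij}^2$ (via the identity $T_iT_j(T_i+T_j)-(T_i^3+T_j^3)=-(T_i+T_j)(T_i-T_j)^2$); these annihilate, and the lemma follows. This cancellation works precisely because $a_{ij}$ is symmetric and because the weights are $T_i^2$ on the left and $T_i$ on the right --- which is exactly why the inequality has the stated shape. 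The remaining points are bookkeeping: the $L^2$-series expansions are justified since $F\in C^\infty$ and $\Omega$ is bounded (so $Fu_i\in H^1_0(\Omega)$, $\psi_i\in L^2(\Omega)$), one keeps signs straight in the two integrations by parts, and indices with $\lambda_i=\lambda_{k+1}$ are harmless because every term then carries a vanishing factor $T_i$.
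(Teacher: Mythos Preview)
The paper does not actually prove this lemma: it simply cites \cite{Ash} and says ``so we skip the proof here.'' Your argument is a correct, self-contained version of the standard commutator/Rayleigh--Ritz proof that one finds in \cite{Ash} and \cite{Sou}: trial functions $\phi_i=Fu_i-\sum_{j\le k}a_{ij}u_j$, the identity $\langle\psi_i,u_j\rangle=(\lambda_j-\lambda_i)a_{ij}$, the Rayleigh--Ritz bound $T_i\|\phi_i\|^2\le\langle\psi_i,\phi_i\rangle$, Cauchy--Schwarz on the tail $\sum_{j>k}$, and the symmetrization that kills the cross terms. All the identities you state check out, including the key cancellation $\sum_{i,j}T_i(T_j-T_i)^2a_{ij}^2+\sum_{i,j}T_i^2(T_j-T_i)a_{ij}^2=0$, and you handle the degenerate cases $T_i=0$ and $\langle\psi_i,\phi_i\rangle=0$ correctly. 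There is nothing to compare against in the paper itself; your write-up is more than what the paper provides.
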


	This lemma can be found in \cite{Ash}, so we skip the proof here.\\
	\indent
	Now let $X$ $:$ $M$ $\longrightarrow$ $R^m$ be an isometric immersion, and $X_{1}$, $\ldots$ , $X_{m}$ the components of the immersion $X$. A straightforward calculation gives
	\begin{eqnarray*}
	\left[-\Delta, X_{\alpha}\right] u_{i}=\left(-\Delta X_{\alpha}\right) u_{i}-2 \nabla X_{\alpha} \cdot \nabla u_{i} .
	\end{eqnarray*}
	By integrating by parts we obtain
	\begin{eqnarray*}
		\left\langle\left[-\Delta, X_{\alpha}\right] u_{i}, X_{\alpha} u_{i}\right\rangle_{L^{2}(\Omega)}=\int_{\Omega}\left|\nabla X_{\alpha}\right|^{2} u_{i}^{2} .
	\end{eqnarray*}
	thus
	\begin{eqnarray*}
	\sum_{\alpha}\left\langle\left[-\Delta, X_{\alpha}\right] u_{i}, X_{\alpha} u_{i}\right\rangle_{L^{2}(\Omega)}=\sum_{\alpha} \int_{\Omega}\left|\nabla X_{\alpha}\right|^{2} u_{i}^{2}=n \int_{\Omega} u_{i}^{2}=n
	\end{eqnarray*}
	On the other hand, we have
	\begin{eqnarray*}
	\left\|\left[-\Delta, X_{\alpha}\right] u_{i}\right\|_{L^{2}(\Omega)}^{2}=\int_{\Omega}\left(\left(-\Delta X_{\alpha}\right) u_{i}-2 \nabla X_{\alpha} \cdot \nabla u_{i}\right)^{2} .
	\end{eqnarray*}
	Since $X$ is an isometric immersion, it follows that
	\begin{eqnarray*}
	 \left(\Delta X_{1},\ldots, \Delta X_{m}\right)=n\mathbf{H}  \\
	  \sum_{\alpha=1}^{m}\left(\nabla X_{\alpha} \cdot \nabla u_{i}\right)^{2} =\left|\nabla u_{i}\right|^{2} \\
	  \sum_{\alpha=1}^{m}\left(-\Delta X_{\alpha}\right) u_{i} \nabla X_{\alpha} \cdot \nabla u_{i} =\frac n2 \mathbf{H} \cdot \nabla u_{i}^{2}=0.\\
	\end{eqnarray*}
	 Using all these facts, we get
	\begin{eqnarray*}
	\sum_{\alpha=1}^{m}\left\|\left[-\Delta, X_{\alpha}\right] u_{i}\right\|_{L^{2}(\Omega)}^{2}=n^{2}\int_{\Omega}|\mathbf{H}|^{2} u_{i}^{2}+4 \int_{\Omega}\left|\nabla u_{i}\right|^{2}.
	\end{eqnarray*}
	Since
	\begin{eqnarray*}
	\int_{\Omega}\left|\nabla u_{i}\right|^{2}=\lambda_{i},
	\end{eqnarray*}
	using Lemma 3.1 we obtain
	\begin{eqnarray}\label{Ash's function}
	n \sum_{i=1}^{k}\left(\lambda_{k+1}-\lambda_{i}\right)^{2} \leq \sum_{i=1}^{k}\left(\lambda_{k+1}-\lambda_{i}\right)\left(n^{2}\int_{\Omega}|\mathbf{H}|^{2}u_{i}^{2}+4 \lambda_{i}\right),
	\end{eqnarray}
	Using H\"older's inequality, we have
	\begin{eqnarray*}
		\int_{\Omega}|\mathbf{H}|^{2}u_{i}^{2} \leq\left\|u_{i}\right\|_{L^{2 p}(\Omega)}^{2} \left\|\left |\mathbf{H} \right |  \right \|_{L^{\frac{2 p}{p-1}}(\Omega)}^{2}.
	\end{eqnarray*}
    Letting $p= \frac{n}{n-2} $ and using Lemma 2.1 we obtain
	\begin{eqnarray}\label{Simon's inequality 1}
		\int_{\Omega}|\mathbf{H}|^{2}u_{i}^{2} \leq C_{1}\left\|\nabla u_{i}\right\|_{L^{2}(\Omega)}^{2} \left\|\left | \mathbf{H} \right |  \right \|^2_{L^{n}\left(\Omega\right)}.
	\end{eqnarray}
Substituting (\ref{Simon's inequality 1}) into (\ref{Ash's function}), we can prove (\ref{first result}). \end{proof}
   
\section{Proof of Theorem \ref{Theorem 1.2}}
    The first part of the proof can be found in \cite{Wang}. To prove Theorem 1.2, we need the following lemma.
    	\begin{lem}[\cite{Wang}]
    Let $\Gamma_{i}$, i=1, \ldots , k+1, be the $ith$ eigenvalue of the Clamped plate problem  and  $u_{i}$  the orthonormal eigenfunction corresponding to  $\Gamma_{i}$ , that is,
    		\begin{eqnarray*}
    		\Delta^{2} u_{i}=\Gamma_{i} u_{i} \quad \text { in } \Omega,\left.\quad u_{i}\right|_{\partial \Omega}=\left.\frac{\partial u_{i}}{\partial \nu}\right|_{\partial \Omega}=0,
    \\ \int_{M} u_{i} u_{j}=\delta_{i j}, \quad \forall \ i, j=1,2, \ldots,k+1.
    	\end{eqnarray*}
    	Then for any smooth function  $h$$:$ $\Omega$$ \rightarrow$ $\mathbf{R}$ , we have
    		\begin{align}\label{lemma 4.1 function}
    		&\sum_{i=1}^{k}\left(\Gamma_{k+1}-\Gamma_{i}\right)^{2} \int_{\Omega} u_{i}^{2}|\nabla h|^{2}\nonumber \\
    		&\leq \delta \sum_{i=1}^{k}\left(\Gamma_{k+1}-\Gamma_{i}\right)^{2} \int_{\Omega}\left(u_{i}^{2}(\Delta h)^{2}+4\left(\left\langle\nabla h, \nabla u_{i}\right\rangle^{2}+u_{i} \Delta h\left\langle\nabla h, \nabla u_{i}\right\rangle\right)-2 u_{i}|\nabla h|^{2} \Delta u_{i}\right) \nonumber\\
    		\quad&+\sum_{i=1}^{k} \frac{\left(\Gamma_{k+1}-\Gamma_{i}\right)}{\delta} \int_{\Omega}\left(\left\langle\nabla h, \nabla u_{i}\right\rangle+\frac{u_{i} \Delta h}{2}\right)^{2},
    		\end{align}
    	where  $\delta$ is any positive constant.
    	\end{lem}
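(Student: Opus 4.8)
The plan is to reduce the inequality to a purely algebraic statement about the Fourier coefficients of $hu_i$ and of $[\Delta,h]u_i$ in the eigenbasis, and then to prove that statement by a Yang-type symmetrization together with the elementary bound $2ab\le\delta a^2+\delta^{-1}b^2$.

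First I would record three integration-by-parts identities, writing $T_i:=\langle\nabla h,\nabla u_i\rangle+\tfrac12u_i\Delta h$, so that $[\Delta,h]u_i=\Delta(hu_i)-h\Delta u_i=2T_i$. Using only the clamped conditions $u_i=\partial_\nu u_i=0$ on $\partial\Omega$ (so that $hu_i\in H^2_0(\Omega)$ and $h^2u_i\in H^2_0(\Omega)$) and the equation $\Delta^2u_i=\Gamma_iu_i$, one obtains $\int_\Omega u_i^2|\nabla h|^2=-\langle[\Delta,h]u_i,hu_i\rangle$, $\;\|[\Delta,h]u_i\|^2=4\int_\Omega(\langle\nabla h,\nabla u_i\rangle+\tfrac12u_i\Delta h)^2$, and $\langle[\Delta^2,h]u_i,hu_i\rangle=\|\Delta(hu_i)\|^2-\Gamma_i\|hu_i\|^2$; expanding $\Delta(hu_i)=h\Delta u_i+2T_i$ and using $\Gamma_i\|hu_i\|^2=\langle\Delta(h^2u_i),\Delta u_i\rangle$ turns the last quantity into $\int_\Omega\!\big(u_i^2(\Delta h)^2+4\langle\nabla h,\nabla u_i\rangle^2+4u_i\Delta h\langle\nabla h,\nabla u_i\rangle-2u_i|\nabla h|^2\Delta u_i\big)$. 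Thus the three objects appearing in the Lemma are $P_i:=\int_\Omega u_i^2|\nabla h|^2$, the first bracket on the right equals $Q_i:=\langle[\Delta^2,h]u_i,hu_i\rangle$, and the last bracket equals $R_i:=\tfrac14\|[\Delta,h]u_i\|^2$.

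Next I would pass to coordinates. Let $\{u_l\}_{l\ge1}$ be the full $L^2(\Omega)$-orthonormal eigenbasis of the clamped plate problem and set $a_{il}:=\langle hu_i,u_l\rangle$, $b_{il}:=\langle[\Delta,h]u_i,u_l\rangle$. Since multiplication by $h$ is self-adjoint and $[\Delta,h]$ is skew-adjoint on $H^2_0(\Omega)$, we have $a_{il}=a_{li}$, $b_{il}=-b_{li}$, and a further integration by parts gives $\langle[\Delta^2,h]u_i,u_l\rangle=(\Gamma_l-\Gamma_i)a_{il}$. By Parseval, $P_i=-\sum_l a_{il}b_{il}$, $Q_i=\sum_l(\Gamma_l-\Gamma_i)a_{il}^2$, $R_i=\tfrac14\sum_l b_{il}^2$, so writing $\rho_i:=\Gamma_{k+1}-\Gamma_i$ (and discarding the $i$ with $\rho_i=0$, for which all terms vanish) the Lemma becomes
$$-\sum_{i=1}^k\rho_i^2\sum_{l\ge1}a_{il}b_{il}\ \le\ \delta\sum_{i=1}^k\rho_i^2\sum_{l\ge1}(\Gamma_l-\Gamma_i)a_{il}^2+\frac1{4\delta}\sum_{i=1}^k\rho_i\sum_{l\ge1}b_{il}^2 .$$
I would prove this by splitting each inner sum at $l=k$. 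For $l>k$ one has $\Gamma_l\ge\Gamma_{k+1}>\Gamma_i$, so $-a_{il}b_{il}\le\delta(\Gamma_l-\Gamma_i)a_{il}^2+\frac1{4\delta(\Gamma_l-\Gamma_i)}b_{il}^2$; multiplying by $\rho_i^2$ and using $\rho_i/(\Gamma_l-\Gamma_i)\le1$ produces exactly the $l>k$ part of the right-hand side. For $l\le k$, symmetrizing in $(i,l)$ and using $a_{il}=a_{li}$, $b_{il}=-b_{li}$ together with $\rho_i^2-\rho_l^2=(\Gamma_l-\Gamma_i)(\rho_i+\rho_l)$, the required inequality collapses to $\sum_{i,l\le k}(\rho_i+\rho_l)\,\frac\delta2\big((\Gamma_l-\Gamma_i)a_{il}+\frac1{2\delta}b_{il}\big)^2\ge0$, which is immediate. (Equivalently, Steps 2--3 can be organised through the trial functions $\varphi_i=hu_i-\sum_{j\le k}a_{ij}u_j$ and the Rayleigh--Ritz characterisation $\Gamma_{k+1}\|\varphi_i\|^2\le\|\Delta\varphi_i\|^2$, but the coefficient form keeps the bookkeeping cleanest.)

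I expect the main obstacle to be the $l\le k$ case of Step 3: one must recognise that, after symmetrization, the $\rho_i^2$-weighted and $\rho_i$-weighted contributions (with the coefficients $\delta$ versus $1/4\delta$) reorganise into the single perfect square above — this is precisely what forces the two different powers of $\Gamma_{k+1}-\Gamma_i$ in the statement and explains where the free parameter $\delta$ enters. A secondary, purely technical point is carrying out the several integrations by parts in Step 1 while verifying term by term that the boundary integrals vanish, which uses both clamped conditions: $u_i=0$ on $\partial\Omega$ to kill terms carrying an undifferentiated $u_i$, and $\partial_\nu u_i=0$ to kill terms carrying $\nabla u_i|_{\partial\Omega}$.
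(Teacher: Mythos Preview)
Your proof is correct and is essentially the paper's argument recast in Fourier-coefficient language: the paper builds the trial functions $\phi_i=hu_i-\sum_{j\le k}r_{ij}u_j$ (your $a_{ij}$ are their $r_{ij}$, your $b_{ij}$ are their $2t_{ij}$), applies Rayleigh--Ritz to obtain the analogue of your $l>k$ estimate, then uses the Cauchy--Schwarz step with parameter $\delta$ and the same symmetrization in $(i,j)$ via $r_{ij}=r_{ji}$, $t_{ij}=-t_{ji}$ that you carry out on the $l\le k$ block. You yourself note this equivalence in your closing parenthetical remark, so there is no substantive methodological difference.
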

    	\begin{proof}
     For $i=1, \ldots, k$ , consider the functions  $\phi_{i} : \Omega \rightarrow\mathbf{R}$  given by
    \begin{eqnarray*}
    \phi_{i}=h u_{i}-\sum_{j=1}^{k} r_{i j} u_{j},
    \end{eqnarray*}
    where
    \begin{eqnarray*}
    r_{i j}=\int_{\Omega} h u_{i} u_{j} .
    \end{eqnarray*}
    Since  $\left.\phi_{i}\right|_{\partial \Omega}=\left.\frac{\partial \phi_{i}}{\partial v}\right|_{\partial \Omega}=0 $ and
    \begin{eqnarray*}
    \int_{\Omega} u_{j} \phi_{i}=0, \forall i, j=1, \ldots, k,
    \end{eqnarray*}
    it follows from the Rayleigh-Ritz inequality that
    \begin{eqnarray}\label{Rayleigh Rize's inequality 2}
    \Gamma_{k+1} \leq \frac{\int_{\Omega} \phi_{i} \Delta^{2} \phi_{i}}{\int_{\Omega} \phi_{i}^{2}} .
    \end{eqnarray}
    We have
	\begin{align}\label{fenzi}
			\int_{\Omega}\phi_{i} \Delta^{2} \phi_{i}
			 &=\int_{\Omega} \phi_{i}\left(\Delta^{2}\left(h u_{i}\right)-\sum_{j=1}^{k}  r_{i j} \Gamma_{j} u_{j}\right)\nonumber \\
			 &=\int_{\Omega} \phi_{i} \Delta^{2}\left(h u_{i}\right)\nonumber \\
			 &=\int_{\Omega} \phi_{i}\left(\Delta\left(u_{i} \Delta h\right)+2 \Delta\left\langle\nabla h, \nabla u_{i}\right\rangle+2\left\langle\nabla h, \nabla\left(\Delta u_{i}\right)\right\rangle+\Delta h \Delta u_{i}+\Gamma_{i} h u_{i}\right) \nonumber\\
			 &=\Gamma_{i}\left\|\phi_{i}\right\|^{2}+\int_{\Omega} \phi_{i}\left(\Delta\left(u_{i} \Delta h\right)+2 \Delta\left\langle\nabla h, \nabla u_{i}\right\rangle+2\left\langle\nabla h, \nabla\left(\Delta u_{i}\right)\right\rangle+\Delta h \Delta u_{i}\right)\nonumber\\
			&= \Gamma_{i}\left\|\phi_{i}\right\|^{2}+\int_{\Omega} h u_{i}\left(\Delta\left(u_{i} \Delta h\right)+2 \Delta\left\langle\nabla h, \nabla u_{i}\right\rangle+2\left\langle\nabla h, \nabla\left(\Delta u_{i}\right)\right\rangle+\Delta h \Delta u_{i}\right)\nonumber \\
			\quad&\quad-\sum_{j=1}^{k} r_{i j} s_{i j}
   \end{align}
				where  $\left\|\phi_{i}\right\|^{2}=\int_{\Omega} \phi_{i}^{2}$  and
		\begin{eqnarray*}
			s_{i j}=\int_{\Omega} u_{j}\left(\Delta\left(u_{i} \Delta h\right)+2 \Delta\left\langle\nabla h, \nabla u_{i}\right\rangle+2\left\langle\nabla h, \nabla\left(\Delta u_{i}\right)\right\rangle+\Delta h \Delta u_{i}\right) .
	    \end{eqnarray*}
	    Multiplying the equation $ \Delta^{2} u_{i}=\Gamma_{i} u_{i} $ by $h u_{j}$ , we have
	   	\begin{eqnarray}\label{Gamma i}
	    h u_{j} \Delta^{2} u_{i}=\Gamma_{i} h u_{i} u_{j} .
	    \end{eqnarray}
	    Changing the roles of  $i$  and  $j$ , one gets
	   	\begin{eqnarray}\label{Gamma j}
	    h u_{i} \Delta^{2} u_{j}=\Gamma_{j} h u_{i} u_{j} .
	    \end{eqnarray}
	     Subtracting (\ref{Gamma i}) from (\ref{Gamma j}) and integrating the resulted equation on $ \Omega$ , we get by using the divergence theorem that
	    \begin{align}\label{r s function 2}
	    	\left(\Gamma_{j}-\Gamma_{i}\right) r_{i j} & =\int_{\Omega}\left(h u_{i} \Delta^{2} u_{j}-h u_{j} \Delta^{2} u_{i}\right) \nonumber\\
	    	& =\int_{\Omega}\left(\Delta\left(h u_{i}\right) \Delta u_{j}-\Delta\left(h u_{j}\right) \Delta u_{i}\right)\nonumber \\
	    	& =\int_{\Omega}\left(\left(u_{i} \Delta h+2\left\langle\nabla h, \nabla u_{i}\right\rangle\right) \Delta u_{j}-\left(\left(u_{j} \Delta h+2\left\langle\nabla h, \nabla u_{j}\right\rangle\right) \Delta u_{i}\right)\right.\nonumber \\
	    	& =\int_{\Omega}\left(u_{j}\left(\Delta\left(u_{i} \Delta h\right)+2 \Delta\left\langle\nabla h, \nabla u_{i}\right\rangle\right)-u_{j} \Delta h \Delta u_{i}+2 u_{j} \operatorname{div}\left(\Delta u_{i} \nabla h\right)\right) \nonumber\\
	    	& =\int_{\Omega} u_{j}\left(\Delta\left(u_{i} \Delta h\right)+2 \Delta\left\langle\nabla h, \nabla u_{i}\right\rangle+\Delta h \Delta u_{i}+2\left\langle\nabla \Delta u_{i}, \nabla h\right\rangle\right)\nonumber \\
	    	& =s_{i j},
	    \end{align}
	    where for a vector field  $Z$  on $ \Omega$, $\operatorname{div} Z$  denotes the divergence of $ Z$ . Observe that
	    \begin{align}\label{first part function}
	    	\int_{\Omega} &hu_{i}\left(\Delta\left(u_{i} \Delta h\right)+2 \Delta\left\langle\nabla h, \nabla u_{i}\right\rangle+2\left\langle\nabla h, \nabla\left(\Delta u_{i}\right)\right\rangle+\Delta h \Delta u_{i}\right) \nonumber\\
	    	 &=\int_{\Omega}\left(\Delta\left(h u_{i}\right) u_{i} \Delta h+2 \Delta\left(h u_{i}\right)\left\langle\nabla h, \nabla u_{i}\right\rangle-2 \Delta u_{i} \operatorname{div}\left(h u_{i} \nabla h\right)+h u_{i} \Delta h \Delta u_{i}\right) \nonumber\\
	    	&=\int_{\Omega}\left(u_{i}^{2}(\Delta h)^{2}+4\left(\left\langle\nabla h, \nabla u_{i}\right\rangle^{2}+u_{i} \Delta h\left\langle\nabla h, \nabla u_{i}\right\rangle\right)-2 u_{i}|\nabla h|^{2} \Delta u_{i}\right) .
	    \end{align}
	    It follows from (\ref{Rayleigh Rize's inequality 2}) , (\ref{fenzi}), (\ref{r s function 2}) and (\ref{first part function}) that
	    \begin{align}\label{Rayleigh Rize inequality}
	    	&\left(\Gamma_{k+1}-\Gamma_{i}\right)|| \phi_{i} \|^{2} \nonumber\\
	    	&\quad\leq \int_{\Omega}\left(u_{i}^{2}(\Delta h)^{2}+4\left(\left\langle\nabla h, \nabla u_{i}\right\rangle^{2}+u_{i} \Delta h\left\langle\nabla h, \nabla u_{i}\right\rangle\right)-2 u_{i}|\nabla h|^{2} \Delta u_{i}\right) \nonumber\\
	    	&\quad+\sum_{j=1}^{k}\left(\Gamma_{i}-\Gamma_{j}\right) r_{i j}^{2} .	
	   \end{align}
	     Set
	    \begin{eqnarray*}
	    t_{i j}=\int_{\Omega} u_{j}\left(\left\langle\nabla h, \nabla u_{i}\right\rangle+\frac{u_{i} \Delta h}{2}\right),
	     \end{eqnarray*}
	    then  $$t_{i j}+t_{j i}=0, $$  and
	    \begin{align}\label{rt function}
	    	\int_{\Omega}-2\phi_{i}\left(\left\langle\nabla h, \nabla u_{i}\right\rangle+\frac{u_{i} \Delta h}{2}\right) & =\int_{\Omega}\left(-2 h u_{i}\left\langle\nabla h, \nabla u_{i}\right\rangle-u_{i}^{2} h \Delta h+2 \sum_{j=1}^{k} r_{i j} t_{i j}\right) \nonumber\\
	    	& =\int_{\Omega} u_{i}^{2}|\nabla h|^{2}+2 \sum_{j=1}^{k} r_{i j} t_{i j} .
	    \end{align}
	     Multiplying (\ref{rt function}) by  $\left(\Gamma_{k+1}-\Gamma_{i}\right)^{2} $ and using the Schwarz inequality and (\ref{Rayleigh Rize inequality}) , we get
	     \begin{align}\label{Almost there function}
	     	&	\left(\Gamma_{k+1}-\Gamma_{i}\right)^{2}\left(\int_{\Omega} u_{i}^{2}|\nabla h|^{2}+2 \sum_{j = 1}^{k} r_{i j} t_{i j}\right) \nonumber\\
	     	&	= \left(\Gamma_{k+1}-\Gamma_{i}\right)^{2} \int_{M}-2 \phi_{i}\left(\left\langle\nabla h, \nabla u_{i}\right\rangle+\frac{u_{i} \Delta h}{2}\right) \nonumber\\
	     	&	= \left(\Gamma_{k+1}-\Gamma_{i}\right)^{2} \int_{M}-2 \phi_{i}\left(\left(\left\langle\nabla h, \nabla u_{i}\right\rangle+\frac{u_{i} \Delta h}{2}\right)-\sum_{j = 1}^{k} t_{i j} u_{j}\right) \nonumber\\
	     	&	\leq \delta\left(\Gamma_{k+1}-\Gamma_{i}\right)^{3}|| \phi_{i} \|^{2}+\frac{\left(\Gamma_{k+1}-\Gamma_{i}\right)}{\delta} \int_{M}\left|\left\langle h, \nabla u_{i}\right\rangle+\frac{u_{i} \Delta h}{2}-\sum_{j = 1}^{k} t_{i j} u_{j}\right|^{2} \nonumber\\
	     	&	\left. = \delta\left(\Gamma_{k+1}-\Gamma_{i}\right)^{3} \|\left.\phi_{i}\right|^{2}+\frac{\left(\Gamma_{k+1}-\Gamma_{i}\right)}{\delta} \int_{\Omega}\left(\left\langle\nabla h, \nabla u_{i}\right\rangle+\frac{u_{i} \Delta h}{2}\right)^{2}-\sum_{j = 1}^{k} t_{i j}^{2}\right) \nonumber\\
	     	& \leq \delta\left(\Gamma_{k+1}-\Gamma_{i}\right)^{2}\left(\int_{\Omega}\left(u_{i}^{2}(\Delta h)^{2}+4\left(\left\langle\nabla h, \nabla u_{i}\right\rangle^{2}+u_{i} \Delta h\left\langle\nabla h, \nabla u_{i}\right\rangle\right)-2 u_{i}|\nabla h|^{2} \Delta u_{i}\right)\right. \nonumber\\
	     	&	\left.\quad+\sum_{j = 1}^{k}\left(\Gamma_{i}-\Gamma_{j}\right) r_{i j}^{2}\right)+\frac{\left(\Gamma_{k+1}-\Gamma_{i}\right)}{\delta}\left(\int_{\Omega}\left(\left\langle\nabla h, \nabla u_{i}\right\rangle+\frac{u_{i} \Delta h}{2}\right)^{2}-\sum_{j = 1}^{k} t_{i j}^{2}\right).
	     	\end{align}
	    Summing over  $i$ from 1 to  $k$  for (\ref{Almost there function}) and noticing $$ r_{i j}=r_{j i},\ t_{i j}=-t_{j i},$$ we get
	    \begin{align}
	    	&\sum_{i=1}^{k}\left(\Gamma_{k+1}-\Gamma_{i}\right)^{2} \int_{\Omega} u_{i}^{2}|\nabla h|^{2}-2 \sum_{i, j=1}^{k}\left(\Gamma_{k+1}-\Gamma_{i}\right)\left(\Gamma_{i}-\Gamma_{j}\right) r_{i j} t_{i j} \nonumber\\
	    	&\leq \sum_{i=1}^{k}\left(\Gamma_{k+1}-\Gamma_{i}\right)^{2} \delta \int_{\Omega}\left(u_{i}^{2}(\Delta h)^{2}+4\left(\left\langle\nabla h, \nabla u_{i}\right\rangle^{2}+u_{i} \Delta h\left\langle\nabla h, \nabla u_{i}\right\rangle\right)-2 u_{i}|\nabla h|^{2} \Delta u_{i}\right)\nonumber \\
	    	&\quad+\sum_{i=1}^{k} \frac{\left(\Gamma_{k+1}-\Gamma_{i}\right)}{\delta} \int_{\Omega}\left(\left\langle\nabla h, \nabla u_{i}\right\rangle+\frac{u_{i} \Delta h}{2}\right)^{2} \nonumber\\
	    	&\quad-\sum_{i, j=1}^{k}\left(\Gamma_{k+1}-\Gamma_{i}\right) \delta\left(\Gamma_{i}-\Gamma_{j}\right)^{2} r_{i j}^{2}-\sum_{i, j=1}^{k} \frac{\left(\Gamma_{k+1}-\Gamma_{i}\right)}{\delta} t_{i j}^{2},
	    \end{align}
which implies (\ref{lemma 4.1 function}).
	    \end{proof}
	   	
	 \begin{proof}
	  Now we begin to prove Theorem 1.2 by using the lemma above.\\
	  \indent
	 Let $ X_{\alpha}$, $\alpha=1, \ldots, m$ , be the components of the position vector $X$ of $M$ in $ \mathbf{R}^{m}$ . Taking  $h=X_{\alpha}$  in (\ref{lemma 4.1 function}) and summing over $ \alpha $, we have
	 \begin{align}\label{lemma 4.1's result}
	 	&\sum_{i=1}^{k}\left(\Gamma_{k+1}-\Gamma_{i}\right)^{2} \sum_{\alpha=1}^{m} \int_{\Omega} u_{i}^{2}\left|\nabla X_{\alpha}\right|^{2} \nonumber\\
	 	&\leq \delta \sum_{i=1}^{k+1}\left(\Gamma_{k+1}-\Gamma_{i}\right)^{2} \sum_{\alpha=1}^{m} \int_{\Omega}\left(u_{i}^{2}\left(\Delta X_{\alpha}\right)^{2}+4\left(\left\langle\nabla X_{\alpha}, \nabla u_{i}\right\rangle^{2}\right.\right. \nonumber\\
	 	&\quad\left.\left.+u_{i} \Delta X_{\alpha}\left\langle\nabla X_{\alpha}, \nabla u_{i}\right\rangle\right)-2 u_{i}\left|\nabla X_{\alpha}\right|^{2} \Delta u_{i}\right)\nonumber\\
	 	&\quad+\sum_{i=1}^{k} \frac{\left(\Gamma_{k+1}-\Gamma_{i}\right)}{\delta} \sum_{\alpha=1}^{m} \int_{\Omega}\left(\left\langle\nabla X_{\alpha}, \nabla u_{i}\right\rangle+\frac{u_{i} \Delta X_{\alpha}}{2}\right)^{2},
	 \end{align}
	 Since $M$ is isometrically immersed in $ \mathbf{R}^{m}$, we have
	 \begin{eqnarray*}
	 \sum_{\alpha=1}^{m}\left|\nabla X_{\alpha}\right|^{2}=n,
	 \end{eqnarray*}
	 hence we have
	  \begin{eqnarray*}
	 \sum_{\alpha=1}^{m} \int_{\Omega} u_{i}^{2}\left|\nabla X_{\alpha}\right|^{2}=n.
	\end{eqnarray*}
	 Also, we have
	\begin{align*}
		\Delta\left(X_{1}, \ldots, X_{m}\right) & \equiv\left(\Delta X_{1}, \ldots, \Delta X_{m}\right)=n\mathbf{H} \\
		\sum_{\alpha=1}^{m}\left\langle\nabla X_{\alpha}, \nabla u_{i}\right\rangle^{2} & =\sum_{\alpha=1}^{m}\left(\nabla u_{i}\left(X_{\alpha}\right)\right)^{2}=\left|\nabla u_{i}\right|^{2}
	\end{align*}
	and
	\begin{eqnarray*}
		\sum_{\alpha=1}^{m} \Delta X_{\alpha}\left\langle\nabla X_{\alpha}, \nabla u_{i}\right\rangle=\sum_{\alpha=1}^{m} \Delta X_{\alpha} \nabla u_{i}\left(X_{\alpha}\right)=\left\langle n \mathbf{H}, \nabla u_{i}\right\rangle=0 .
	\end{eqnarray*}
	 Substituting these facts into (\ref{lemma 4.1's result}), we obtain
	 \begin{align}\label{almost there function 2}
	 	n \sum_{i=1}^{k}\left(\Gamma_{k+1}-\Gamma_{i}\right)^{2} \leq & \delta \sum_{i=1}^{k}\left(\Gamma_{k+1}-\Gamma_{i}\right)^{2} \int_{\Omega}\left(n^{2} u_{i}^{2}|\mathbf{H}|^{2}+4\left|\nabla u_{i}\right|^{2}-2 n u_{i} \Delta u_{i}\right) \nonumber\\
	 	& +\sum_{i=1}^{k} \frac{\left(\Gamma_{k+1}-\Gamma_{i}\right)}{\delta} \int_{\Omega}\left(\left|\nabla u_{i}\right|^{2}+\frac{n^{2} u_{i}^{2}|\mathbf{H}|^{2}}{4}\right) \nonumber\\
	 	\leq & \delta \sum_{i=1}^{k}\left(\Gamma_{k+1}-\Gamma_{i}\right)^{2}\left(n^{2} C_{1}\||\mathbf{H}|\|^2_{L^n(\Omega)}+(2 n+4) \right)\Gamma_{i}^{1 / 2} \nonumber\\
	 	& +\sum_{i=1}^{k} \frac{\left(\Gamma_{k+1}-\Gamma_{i}\right)}{\delta}\left(1+\frac{n^{2}C_{1} \||\mathbf{H}|\|^2_{L^n(\Omega)}}{4}\right)\Gamma_{i}^{1 / 2} .
	 \end{align}
	  Here in the last inequality, we have used  the fact that
	   \begin{eqnarray}\label{eigenfunction's ineuqality}
	  \int_{\Omega}\left|\nabla u_{i}\right|^{2}=-\int_{\Omega} u_{i} \Delta u_{i} \leq\left(\int_{\Omega} u_{i}^{2}\right)^{1 / 2}\left(\int_{\Omega}\left(\Delta u_{i}\right)^{2}\right)^{1 / 2}=\Gamma_{i}^{1 / 2} .
	  	\end{eqnarray}
	  Taking
	   \begin{eqnarray*}
	  \delta=\left\{\frac{\sum_{i=1}^{k}\left(\Gamma_{k+1}-\Gamma_{i}\right)\left(\left(1+\frac{n^{2}C_{1} \||\mathbf{H}|\|^2_{L^n(\Omega)}}{4}\right)\Gamma_{i}^{1 / 2} \right)}{\sum_{i=1}^{k}\left(\Gamma_{k+1}-\Gamma_{i}\right)^{2}\left(\left(n^{2}C_{2} \||\mathbf{H}|\|^2_{L^n(\Omega)}+(2 n+4) \right)\Gamma_{i}^{1 / 2}\right)}\right\}^{1 / 2}
	  	\end{eqnarray*}
	  in (\ref{almost there function 2}), one gets (\ref{second result}).
	  \end{proof}
	   \section{Proof of Theorem \ref{Theoerm 1.4}}
	   \begin{proof}
	   From the proof of Theorem 1.1 , we have
        \begin{eqnarray}\label{Ash's function 1}
	   		n \sum_{i=1}^{k}\left(\lambda_{k+1}-\lambda_{i}\right)^{2} \leq \sum_{i=1}^{k}\left(\lambda_{k+1}-\lambda_{i}\right)\left(\int_{\Omega}n^2|\mathbf{H}|^{2}u_{i}^{2}+4 \lambda_{i}\right),
	   	\end{eqnarray}
	   	Using H\"older's inequality, we have
	   	\begin{eqnarray}\label{Holder's inequalty}
	   		\int_{\Omega}|\mathbf{H}|^{2}u_{i}^{2} \leq\left\|u_{i}\right\|_{L^{\frac{2 p}{p-1}}(\Omega)}^{2} \left\|\left | \mathbf{H} \right |  \right \|_{L^{2 p}(\Omega)}^{2}.
	   	\end{eqnarray}
	   	Taking $p=\frac{n}{2} $, using Lemma 2.2, we obtain
	   	\begin{eqnarray}\label{lemma 2.2's result 2}
	   		\left(\int_{\Omega}|u_{i}|^{\frac{2 n}{n-2}} \right)^{\frac{n-2}{n}} \leq C_{s} \int_{\Omega}|\nabla u_{i}|^{2} ,
	   	\end{eqnarray}
	   	where  $C_{s}=\left(\frac{C_{2}}{1-\||H|\|_{L^n(\Omega)} C_{2}} \frac{2(n-1)}{n-2}\right)^{2} $.
	   	Thus
	   	\begin{eqnarray}\label{Simon's inequality 2}
	   		\int_{\Omega}|\mathbf{H}|^{2}u_{i}^{2} &\leq& C_{s}\left\|\nabla u_{i}\right\|_{L^{2}(\Omega)}^{2}\left\|\left | \mathbf{H} \right |  \right \|_{L^{n}\left(\Omega\right)}^{2}\nonumber\\
	   		&=&C_s\||\mathbf{H}|\|^2_{L^n(\Omega)}\lambda_i.
\end{eqnarray}
Substituting (\ref{Simon's inequality 2}) into (\ref{Ash's function 1}), we can prove (\ref{third result}).
 \end{proof}

\section{Proof of Theorem \ref{Theorem 1.5}}
\begin{proof}
	From the proof of Theorem 1.2, we have
	\begin{align}\label{theorem 1.2's result}
		n \sum_{i=1}^{k}\left(\Gamma_{k+1}-\Gamma_{i}\right)^{2} \leq & \delta \sum_{i=1}^{k}\left(\Gamma_{k+1}-\Gamma_{i}\right)^{2} \int_{\Omega}\left(n^{2} u_{i}^{2}|\mathbf{H}|^{2}+4\left|\nabla u_{i}\right|^{2}-2 n u_{i} \Delta u_{i}\right) \nonumber\\
		& +\sum_{i=1}^{k} \frac{\left(\Gamma_{k+1}-\Gamma_{i}\right)}{\delta} \int_{\Omega}\left(\left|\nabla u_{i}\right|^{2}+\frac{n^{2} u_{i}^{2}|\mathbf{H}|^{2}}{4}\right)
	\end{align}
	Using (\ref{eigenfunction's ineuqality}),(\ref{Holder's inequalty}) and (\ref{lemma 2.2's result 2}), we obtain
	\begin{eqnarray}\label{Simon's inequality 3}
		\int_{M}|\mathbf{H}|^{2}u_{i}^{2} &\leq& C_{s}\left\|\nabla u_{i}\right\|_{L^{2}(\Omega)}^{2} \left\|\left | \mathbf{H} \right |  \right \|_{L^{n}\left(\Omega\right)}^{2}\nonumber\\
		&\leq&C_s\left\|\left | \mathbf{H} \right |  \right \|^2_{L^{n}\left(\Omega\right)}\Gamma_i^{\frac12}.
	\end{eqnarray}
	where $C_{s}=\left(\frac{C_{2}}{1-\||\mathbf{H}|\|_{L^n(\Omega)} C_{2}} \frac{2(n-1)}{n-2}\right)^{2} $.
	
Substituting (\ref{Simon's inequality 3}) into (\ref{theorem 1.2's result}) , and taking
	 \begin{eqnarray*}
		\delta=\left\{\frac{\sum_{i=1}^{k}\left(\Gamma_{k+1}-\Gamma_{i}\right)\left(\left(1+\frac{n^{2}C_{s} \left\|\left | \mathbf{H} \right |  \right \|^2_{L^{n}\left(\Omega\right)}}{4}\right)\Gamma_{i}^{1 / 2}\right)}{\sum_{i=1}^{k}\left(\Gamma_{k+1}-\Gamma_{i}\right)^{2}\left(\left(n^{2}C_{s} \left\|\left | \mathbf{H}\right |  \right \|^2_{L^{n}\left(\Omega\right)}+(2 n+4) \right)\Gamma_{i}^{1 / 2}\right)}\right\}^{1 / 2}
	\end{eqnarray*}
	we can prove (\ref{fourth result}).
\end{proof}
\section{Acknowledgement}
 The work is supported by the National NSF of China (Grant no. 12271069). The authors would like to thank the referees for their useful comments and suggestions which make this paper more readable.
\vspace{0.5cm}




	
	{}
	\vspace{1cm}\sc
	
Yong Luo

Mathematical Science Research Center of Mathematics,

Chongqing University of Technology,

Chongqing, 400054, China

{\tt yongluo-math@cqut.edu.cn}

\vspace{1cm}\sc
Xianjing Zheng

Mathematical Science Research Center of Mathematics,

Chongqing University of Technology,

Chongqing, 400054, China

{\tt 1311169535@qq.com}

\end{document}